\documentclass[11pt]{article}


\usepackage{epsfig}
\usepackage{graphicx}
\usepackage{amsbsy}
\usepackage{amsmath}
\usepackage{amsfonts}
\usepackage{amssymb}
\usepackage{textcomp}
\usepackage{hyperref}
\usepackage{aliascnt}

\newcommand{\mcm}[3]{\newcommand{#1}[#2]{{\ensuremath{#3}}}} 

\mcm{\tuple}{1}{\langle #1 \rangle}
\mcm{\name}{1}{\ulcorner #1 \urcorner}
\mcm{\Nbb}{0}{\mathbb{N}}
\mcm{\Zbb}{0}{\mathbb{Z}}
\mcm{\Rbb}{0}{\mathbb{R}}
\mcm{\Cbb}{0}{\mathbb{C}}
\mcm{\Qbb}{0}{\mathbb{Q}}
\mcm{\Acal}{0}{\cal A}
\mcm{\Bcal}{0}{\cal B}
\mcm{\Ccal}{0}{\cal C}
\mcm{\Dcal}{0}{\cal D}
\mcm{\Ecal}{0}{\cal E}
\mcm{\Fcal}{0}{\cal F}
\mcm{\Gcal}{0}{\cal G}
\mcm{\Hcal}{0}{\cal H}
\mcm{\Ical}{0}{\cal I}
\mcm{\Jcal}{0}{\cal J}
\mcm{\Kcal}{0}{\cal K}
\mcm{\Lcal}{0}{\cal L}
\mcm{\Mcal}{0}{\cal M}
\mcm{\Ncal}{0}{\cal N}
\mcm{\Ocal}{0}{{\cal O}}
\mcm{\Pcal}{0}{{\cal P}}
\mcm{\Qcal}{0}{{\cal Q}}
\mcm{\Rcal}{0}{{\cal R}}
\mcm{\Scal}{0}{{\cal S}}
\mcm{\Tcal}{0}{{\cal T}}
\mcm{\Ucal}{0}{{\cal U}}
\mcm{\Vcal}{0}{{\cal V}}
\mcm{\Wcal}{0}{{\cal W}}
\mcm{\Xcal}{0}{{\cal X}}
\mcm{\Ycal}{0}{{\cal Y}}
\mcm{\Zcal}{0}{{\cal Z}}
\mcm{\Mfrak}{0}{\mathfrak M}

\mcm{\restric}{0}{\upharpoonright}
\mcm{\upset}{0}{\uparrow}
\mcm{\onto}{0}{\twoheadrightarrow}
\mcm{\smallNbb}{0}{{\small \mathbb{N}}}
\DeclareMathOperator{\preop}{op}
\mcm{\op}{0}{^{\preop}}

\newcommand{\se}{\subseteq}
%
{\begin{array}{c}
\setlength{\unitlength}{1em}}%
{\end{array}}

\usepackage{amsthm}

\newcommand{\theoremize}[2]{\newaliascnt{#1}{thm} \newtheorem{#1}[#1]{#2} \aliascntresetthe{#1}}

\theoremstyle{plain}
\newtheorem{thm}{Theorem}[section]
\theoremize{lem}{Lemma}
\theoremize{skolem}{Skolem}
\theoremize{fact}{Fact}
\theoremize{sublem}{Sublemma}
\theoremize{claim}{Claim}
\theoremize{obs}{Observation}
\theoremize{prop}{Proposition}
\theoremize{cor}{Corollary}
\theoremize{que}{Question}
\theoremize{oque}{Open Question}
\theoremize{con}{Conjecture}

\theoremstyle{definition}
\theoremize{dfn}{Definition}
\theoremize{rem}{Remark}
\theoremize{eg}{Example}
\theoremize{exercise}{Exercise}
\theoremstyle{plain}

\newcommand{\myref}[1]{\autoref{#1}~(\nameref{#1})}
\usepackage{verbatim}
\usepackage{enumerate}
\usepackage[all]{xy}

\usepackage{subfig}

\title{Local 2-separators
}

\author{Johannes Carmesin
\medskip 
\\
  {University of Birmingham}
}
\newcommand{\sm}{\setminus}

\DeclareMathOperator{\expl}{Ex_r}

\mcm{\Fbb}{0}{\mathbb{F}}

\begin{document}

\maketitle

\begin{abstract}

How can sparse graph theory be extended to large networks, where algorithms whose running time is 
estimated using the number of vertices are not good enough? 
I address this question by introducing `Local Separators' 
of graphs. 
Applications include:
\begin{enumerate}
 \item A unique decomposition theorem for graphs along their local 2-separators analogous to 
the 2-separator 
theorem;
\item an exact characterisation of graphs with no bounded subdivision of a wheel 
\cite{loc_ser_para}.
\end{enumerate}
\end{abstract}

\section{Introduction}

One of the big challenges in Graph Theory today is to develop methods and algorithms to study 
sparse 
large networks; that is, graphs where the number of edges is about linear in the number of 
vertices, 
and the number of vertices is so large that algorithms whose running time is estimated in terms of 
the vertex number are not good enough. Important contributions that provide partial results towards 
this big aim include the following. 

\begin{enumerate}
 \item 
{\bf Benjamini-Schramm limits of graphs.} Benjamini and Schramm introduced a notion of convergence 
of 
sequences of graphs that is based on neighbourhoods of vertices of bounded radius in 
\cite{BeSchrRec}. 
Applications of these methods include: testing for minor closed properties 
\cite{benjamini2010every} by Benjamini, Schramm  and Shapira or the proof of recurrence 
of planar graph limits by Gurel-Gurevich and Nachmias \cite{gurel2013recurrence}. 

\item
{\bf From Graphons to Graphexes.} Graphons have 
turned out to be a useful tool to study 
dense large networks \cite{{lovasz2012large},{lovaszarbeitsgemeinschaft}}. Motivated by 
these successes, analogues for sparse graph limits are proposed in 
\cite{{borgs2017sparse},{caron2017sparse},{herlau2016completely}}. 

\item
{\bf Graph Clustering.} 
The spectrum of the adjacency matrix of a graph can be used to 
identify large clusters, see the surveys \cite{von2007tutorial} or \cite{schaeffer2007graph}.

\item
{\bf Nowhere dense classes of graphs.} In their book \cite{nevsetvril2012sparsity},  
Ne{\v{s}}et{\v{r}}il and Ossana de 
Mendez 
systematically study a whole zoo of classes of sparse graphs and the relation between these classes.

\item
{\bf Refining tree-decompositions techniques.} 
Empirical results by Adcock, Sullivan and Mahoney suggest that some 
large networks do have tree-like structure \cite{adcock2013tree}. In \cite{adcock2016tree}, these 
authors say that: `Clearly, there is a need to develop Tree-Decompositions heuristics that are 
better-suited for the properties of
realistic informatics graphs'.
And they set the challenge to develop methods that combine the local and global structure of graphs 
using tree-decompositions methods. 
\end{enumerate}

Much of sparse graph theory -- in particular of graph minor theory -- is built upon the 
notion of a 
separator, which allows to cut graphs into smaller pieces, solve the relevant problems there and 
then stick together these partial solutions to global solutions. These methods include: 
tree-decompositions 
\cite{gmx}, 
the 
2-separator theorem and the block-cutvertex theorem, 
Seymour's decomposition theorem for regular matroids \cite{seymour1980decomposition}, as well as 
clique sums and rank width decompositions \cite{oum2005rank}. Understanding the relevant 
decomposition methods properly is fundamental 
to recent breakthroughs such as the Graph Minor Theorem \cite{MR2099147} or the Strong Perfect 
Graph Theorem \cite{robertson2006strong}.
As whether a given vertex set is separating depends on each vertex individually. So in the context 
of large networks it is unfeasible to test whether a set of vertices is separating. We believe that 
in order to extend such methods from sparse graphs to large networks, it is key to answer the 
following question. What are local separators of large networks?

Here, we answer this question. Indeed, we provide an example demonstrating that the naive 
definition of local separators misses key properties of separators. Then we introduce local 
separators of graphs that lack this defect. 
Our new methods have the following 
applications.

\begin{enumerate}[A)]

 \item A unique decomposition theorem for graphs along their local 2-separators analogous to 
the 2-separator 
theorem;
\item an exact characterisation of graphs with no bounded subdivision of a wheel. This 
 connects to  
direction (4) outlined above  \cite{loc_ser_para};
\item (work in progress) an analogue of the tangle-tree theorem of Robertson and Seymour, where the 
decomposition-tree 
is replaced by a general graph. This connects to direction (5).
\end{enumerate}

   \begin{figure} [htpb]   
\begin{center}
   	  \includegraphics[height=4cm]{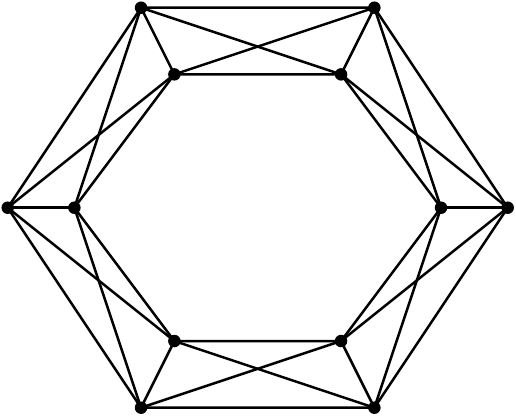}
   	  \caption{The graph $C_6 \boxtimes K_1$.}\label{fig:cycleK4}
\end{center}
   \end{figure}

\begin{eg}\label{eg:cycle-deco}
What is the structure of the graph in \autoref{fig:cycleK4}? According to the 
2-separator theorem, this graph is 3-connected and hence a basic graph that cannot be decomposed 
further. In this paper, however, we consider finer decompositions and according to our main 
theorem, 
the structure of this graph is: a family of complete graphs $K_4$ glued together in a cyclic way. 
 
\end{eg}

{\bf Our results.}
The 2-separator theorem\footnote{See {\cite[Section 4]{cunningham1980combinatorial}} for an 
overview of the history of 
the 2-separator theorem, see also \cite{seymour1980decomposition}. An 
alternative formulation of this theorem in terms of `2-sums' is given 
in \autoref{sec2}.} (in the strong form of Cunningham and Edmonds 
\cite{cunningham1980combinatorial}) says that every 2-connected graph has a unique minimal 
tree-decomposition of adhesion two all of whose torsos are 3-connected or cycles. 
We work with the natural extension of 
`tree-decompositions' where the decomposition-tree is replaced by an arbitrary graph. We refer to 
them as `graph-decompositions'.

Addressing the challenge set by 
Adcock, Sullivan and Mahoney, our main result is the following local strengthening of 
the 2-separator theorem.

\begin{thm}\label{thm:main_intro}
For every $r\in \Nbb\cup\{\infty\}$, every connected $r$-locally 2-connected graph $G$ has a 
graph-decomposition of adhesion two and locality $r$ such that all its torsos are $r$-locally 
3-connected or cycles of length at most $r$. 

Moreover, the separators of this graph-decomposition are the $r$-local 2-separators of $G$ that do 
not cross any other $r$-local 2-separator. 
\end{thm}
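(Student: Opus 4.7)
The plan is to mimic the Cunningham--Edmonds proof of the classical 2-separator theorem, with everything reinterpreted in the $r$-local setting. Let $\mathcal{S}$ be the set of $r$-local 2-separators of $G$ that do not cross any other $r$-local 2-separator of $G$. The theorem explicitly tells us what the separators of our graph-decomposition must be, so the first task is to show that the family $\mathcal{S}$ is itself internally consistent: any two members of $\mathcal{S}$ are laminar (non-crossing), and their `sides' behave well under intersection and union within the $r$-ball around each vertex. I would prove this by a direct case analysis, exploiting that crossing is witnessed locally (within some fixed radius $r$) so the classical non-crossing arguments transfer after passing to appropriate local subgraphs.

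Next, I would construct the graph-decomposition $(H,\mathcal{V})$ directly from $\mathcal{S}$. Declare two vertices of $G$ equivalent if no element of $\mathcal{S}$ separates them locally; the equivalence classes, enlarged by the nearby separator vertices, become the parts $V_h$ indexed by the nodes $h$ of $H$, and each element of $\mathcal{S}$ becomes an edge of $H$ linking the two parts on either side of it. Adhesion two and locality $r$ are then immediate from the definitions: adhesions are exactly the chosen $r$-local 2-separators, and everything was defined by looking only inside balls of radius $r$.

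The main obstacle, and the technical heart of the argument, is proving that every torso $T_h$ is either $r$-locally 3-connected or a cycle of length at most $r$. The approach is by contradiction: suppose $T_h$ is not $r$-locally 3-connected and not a short cycle. Then $T_h$ contains an $r$-local 2-separator $S$, and I would lift $S$ back to an $r$-local 2-separator $S'$ of $G$. Either $S' \in \mathcal{S}$, contradicting the construction of the part $V_h$ (since $S'$ would have separated $V_h$ further), or $S'$ crosses some $S'' \in \mathcal{S}$, contradicting the non-crossing hypothesis defining $\mathcal{S}$. The remaining case, in which $T_h$ has no proper local 2-separator but is still not 3-connected, is precisely the cycle case; to conclude a cycle of length $\le r$ rather than a longer cycle, one argues that a long cycle torso would itself contain non-crossing $r$-local 2-separators that ought to have been picked up by $\mathcal{S}$.

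Finally, the `moreover' clause delivers uniqueness for free: the set $\mathcal{S}$ is intrinsically defined from $G$ and $r$, so any graph-decomposition satisfying the theorem must have exactly $\mathcal{S}$ as its separators, and hence exactly the parts and torsos produced above. I expect the hardest step to be the lifting of an internal 2-separator of a torso back to $G$ while preserving the local condition: balls in $T_h$ are not literally balls in $G$, so one must control how shortest paths inside the torso compare with paths in $G$ that may exit and re-enter $V_h$ through other adhesions, and verify the radius $r$ is respected throughout. Everything else is bookkeeping once this lifting lemma is in place.
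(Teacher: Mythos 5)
Your overall scaffolding matches the paper's: let $\Ncal$ (your $\mathcal{S}$) be the set of $r$-local 2-separators not crossed by any $r$-local 2-separator, cut along $\Ncal$, and show the pieces are $r$-locally 3-connected or short cycles. But there is a genuine gap in the ``main obstacle'' step, and it hides the real work of the paper.

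You argue: take any $r$-local 2-separator $S$ of the torso $T_h$, lift it to $S'$ in $G$; then either $S'\in\mathcal{S}$ (contradiction) or $S'$ crosses some $S''\in\mathcal{S}$ (contradiction). This dichotomy is not exhaustive. If $S'\notin\mathcal{S}$, all you know is that $S'$ is crossed by \emph{some} $r$-local 2-separator $T$ of $G$; there is no reason for $T$ to lie in $\mathcal{S}$, nor for $S'$ to cross anything in $\mathcal{S}$. In that third case your argument gives no contradiction. The paper closes this gap with \autoref{structure_torso}: in a connected $r$-locally 2-connected graph in which \emph{every} $r$-local 2-separator is crossed by another, the graph is already $r$-locally 3-connected or a short cycle. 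Contrapositively, if $T_h$ is neither, then $T_h$ has an $r$-local 2-separator $\{v,w\}$ that is \emph{uncrossed inside $T_h$}; this carefully chosen separator (not an arbitrary one) is what you project back to $G$. Then $\{v,w\}\notin\mathcal{S}$, so it is crossed in $G$ by some $\{a,b\}$; since members of $\mathcal{S}$ are uncrossed, $\{a,b\}$ does not cross anything in $\mathcal{S}$ and hence lifts back to the quotient, and the paper's \autoref{lift_non-crossing} shows $\{a,b\}$ and $\{v,w\}$ still cross there, contradicting the choice of $\{v,w\}$. Without \autoref{structure_torso} (whose proof in turn needs the Alternating Cycle Lemma, symmetry of crossing, and the Corner Lemma) the torso argument does not close.

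A second, deeper issue: you treat ``local 2-separator'' as though the naive definition (disconnection of a punctured double ball $B_{r/2}(v)\cup B_{r/2}(w)$) were usable, and you plan to transfer ``the classical non-crossing arguments'' into balls. The paper shows this fails: with double balls neither the Corner Lemma nor the Projection Lemma holds, as witnessed by the example in \autoref{fig:no-corner} and \autoref{rem77}, so your lifting step (which you already flag as the hardest point) would actually break rather than be bookkeeping. The correct definition must use the explorer-neighbourhood $\expl(v,w)$, which differs from the double ball precisely in that it may keep two copies of a far vertex whose two shortest routes back to the core are seen only separately by $v$ and $w$. This choice is not an optimization but a prerequisite for \autoref{corner-lemma} and \autoref{recur_gd0_ONESTEP}, and hence for the entire projection/lifting machinery your argument relies on.
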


A key step in the proof of \autoref{thm:main_intro} is the following result, which seems to be of 
independent interest.
This can be seen as a local analogue of the fact 
that any 2-connected graph that is not 3-connected in which any 2-separator is crossed is a cycle.

\begin{thm}\label{structure_torso_intro}\label{structure_torso}
Let $r\in \Nbb\cup\{\infty\}$ and let $G$ be a connected graph that is $r$-locally 2-connected.
Assume that every $r$-local 2-separator of $G$ is crossed by an $r$-local 2-separator. 
Then $G$ is $r$-locally $3$-connected or a cycle of length at most $r$.
\end{thm}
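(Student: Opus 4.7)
The plan is to mimic the classical fact that a 2-connected graph in which every 2-separator is crossed must be a cycle, but to run the argument locally inside balls of radius $r$ and then globalise the resulting cycle structure. First, the easy dichotomy: if $G$ has no $r$-local 2-separator at all, then $G$ is by definition $r$-locally 3-connected and there is nothing to show. Otherwise I fix an $r$-local 2-separator $\{u,v\}$; by hypothesis it is crossed by some other $r$-local 2-separator $\{x,y\}$.

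Next I would work inside a single ball $B$ of radius $r$ around an appropriately chosen vertex so that both $\{u,v\}$ and $\{x,y\}$ witness honest 2-separations of $B$. Since $B$ is 2-connected by $r$-local 2-connectedness of $G$, and the two separations still cross inside $B$, the classical 2-separator crossing argument applies to $B$: the four pieces cut out by $\{u,v,x,y\}$ shrink down to single edges, and a neighbourhood of this configuration in $B$ looks like a 4-cycle $u,x,v,y$ with the rest of $B$ hanging off its four vertices.

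I would then iterate. Each of $\{u,x\}$, $\{x,v\}$, $\{v,y\}$, $\{y,u\}$ should be verified to be an $r$-local 2-separator of $G$, so each is in turn crossed by a further $r$-local 2-separator to which the same local analysis applies. Traversing $G$ in this way produces a walk of $r$-local 2-separators along a cycle-like structure; using connectedness of $G$ together with $r$-local 2-connectedness to exclude attached branches or additional incident edges, this walk closes into a cycle $C$ that exhausts $G$, so $G = C$. For the length bound, every pair of crossing $r$-local 2-separators is witnessed inside a common radius-$r$ ball, so the cycle closes up within distance $r$, giving length at most $r$.

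The step I expect to be the main obstacle is the middle one: making the classical crossing argument rigorous for the ball $B$ while keeping careful track of the distinction between separations of $B$ and $r$-local separations of $G$, so that the inductive claim that ``the next pair $\{u,x\}$ is again an $r$-local 2-separator of $G$, hence again crossed'' is honestly justified, and the cycle-building procedure does not covertly require stronger local connectedness than the hypothesis supplies.
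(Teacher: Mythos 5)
Your proposal captures the right slogan (``mimic the classical fact that a $2$-connected graph in which every $2$-separator is crossed is a cycle''), but the route you sketch differs from the paper's and has several gaps that I do not think can be patched without essentially rediscovering the paper's machinery.

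First, the claim that ``$B$ is $2$-connected by $r$-local $2$-connectedness of $G$'' is not justified. $r$-local $2$-connectedness says only that $B_{r/2}(v)-v$ is connected for every vertex $v$; it says nothing about what happens when you delete some \emph{other} vertex $u$ from $B_{r/2}(v)$. That vertex $u$ may well be a cutvertex of the ball around $v$ even though it is not an $r$-local cutvertex of $G$ (the escape routes live in $B_{r/2}(u)\setminus B_{r/2}(v)$). Similarly, whether $\{u,v\}$ and $\{x,y\}$ ``witness honest $2$-separations of $B$'' is precisely the kind of subtlety the paper's explorer-neighbourhood notion was designed to handle; it is not free, and the 2-separators are actually defined via $\expl(u,v)$, not via a single ball.

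Second, you flag as ``the main obstacle'' that each corner $\{u,x\}$, $\{x,v\}$, $\{v,y\}$, $\{y,u\}$ should again be an $r$-local $2$-separator, but you offer no mechanism for proving it and no acknowledgement that it is simply false without extra conditions. In the paper this is the Corner Lemma, and it only applies when a ``person lives in the corner,'' i.e.\ some vertex sits on the correct side of both local separators. Without that hypothesis there is no corner separator at all.

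Third, and most importantly, the iterative ``walk around the graph building the cycle'' does not obviously terminate or exhaust $G$, and you do not explain why attached branches cannot exist. The paper avoids this entirely: it applies the Alternating Cycle Lemma to get a \emph{single concrete} cycle $o$ of length at most $r$ alternating between the two crossing local $2$-separators, then argues by contradiction that $G=o$. Assuming some vertex $x_2$ is outside $o$ and adjacent to a vertex $x_1\in o$, it considers the family $\Scal$ of local $2$-separators lying on $o$ (split into two cases depending on whether some $\{x_1,y\}$ with $y\in o$ is a local $2$-separator), picks an element $\{w_1,w_2\}$ of \emph{minimal size} (measured by how many vertices of $o$ are on the $x_1$-side, resp.\ the $x_2$-side), and uses \autoref{alt_exist}, \autoref{unique_copy_extended}, \autoref{cross_sym} and the Corner Lemma to show that if $\{w_1,w_2\}$ were crossed, the corner would produce a strictly smaller member of $\Scal$, a contradiction. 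This extremal argument is what replaces your informal ``walk closes up''; it is the missing idea. The length bound $\le r$ then comes for free from the Alternating Cycle Lemma, as you correctly anticipated.

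In short: your plan is missing the reduction to a single alternating cycle $o$, the extremal (minimal-size) choice of local $2$-separator on $o$, and a correct statement of when the corner construction actually yields a new local $2$-separator. Without these, the inductive/iterative loop you describe does not close.
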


\vspace{.3cm}

Beyond applications (A) to (C) mentioned above, this research includes the following 
applications.

\begin{enumerate}[A)]\setcounter{enumi}{3}
\item 
An algorithmic advantage of our main theorem is that the parallel running time of the corresponding 
algorithm does not 
depend on the number of vertices of the graph but just on the local structure\footnote{Indeed, 
we prove that for any pair of crossing local 2-separators there must be a cycle of length at most 
$r$ through their vertices.}; 
and we expect that 
our novel tool will be useful to study large networks. Indeed, a consequence of 
\autoref{thm:main_intro} is that one can pick the local 2-separators greedily, and all maximal 
graph-decompositions constructed in that way are essentially the same; in the sense that they 
contain the minimal graph-decomposition and additionally only have a few insignificant local 
2-separators within cycles of length at most $r$.

 \item Covers are important tools in Topology \cite{Hatcher} and Group Theory 
\cite{{serre_trees},{bass1993covering}}.
For covers of graphs, we refer the reader to the book 
\cite{gross2001topological} or the recent survey \cite{kwak2007graphs}.
Recent work includes \cite{benjamini2016structure},
\cite{benjamini2018structure}, \cite{fiala20183} and \cite{georgakopoulos2017covers}. 
The universal cover of a connected graph $G$ is always a tree and covers $G$. 
The 
\emph{$r$-local cover}, which is obtained by relaxing all cycles not generated by cycles 
of length at most $r$,  is covered by the universal cover but covers $G$. 
Our $r$-local 2-separator theorem lifts to the $r$-local cover of $G$, characterising 
the torsos of the 2-separator theorem of the cover as being the torsos of the 
$r$-local 2-separator theorem of $G$.  

\item Local tree-decompositions are considered in  \cite{grohe2003local} and 
\cite{frick2001deciding}. Here (and in the follow-up work \cite{locksepr} for arbitrary local 
separators), we offer tools to unify such collections of local tree-decompositions to a single 
graph-decomposition displaying the global structure of the graph. 
 
 \item Tree-decompositions have been used to study Cayley graphs of groups and other highly 
symmetric objects \cite{{hamann2017classification},{hamann2013classification}}. However, these tools 
were most helpful for infinite groups  as 
finite groups do not look like trees (roughly speaking). The graph-decompositions we construct here 
are invariant under the group of automorphisms and we expect that they can be used as a 
combinatorial tool to study  geometric properties of finite groups.  
\end{enumerate}

The remainder of this paper is structured as follows. 
In \autoref{sec2} we give an alternative formulation of \autoref{thm:main_intro}, and start 
explaining basic concepts, which we continue in \autoref{sec4}.

We 
invite all readers to look at \autoref{sec:blockcut} just after \autoref{sec4}. 
Indeed, in there we prove a local strengthening of the block-cutvertex theorem. This is a 
straightforward exercise, and it is not used in the rest of the paper. However, we believe it helps 
to digest 
the rest of the paper.  

In \autoref{sec:ex}, we prove an interesting special case of our main result (the parts of the 
proof that are not needed in our proof of \autoref{thm:main_intro} are put into \autoref{ex2}). 
Before proving \autoref{structure_torso_intro} in \autoref{sec:crossed}, we do some preparation in 
\autoref{props}. 

In \autoref{sec:uni}, we prove \autoref{minimal_N}, which implies \autoref{main_2sepr-intro}, a 
variant of \autoref{thm:main_intro}. 
Graph-decompositions are introduced in \autoref{sec:graph-deco}, and we conclude this section 
by deducing \autoref{thm:main_intro} from \autoref{minimal_N}.
Finally, in \autoref{sec:out} we discuss 
directions for further research. To make it easier for the reader to navigate through this paper, 
we added important definitions of this paper to the \lq table of content\rq, allowing readers to 
hyperlink to them in the pdf via the table of contents. 
Throughout the paper we fix a parameter $r\in \Nbb\cup\{\infty\}$.

\section{Constructive perspective}\label{sec2}

In this section we give an alternative formulation for \autoref{thm:main_intro} and define some 
basic notions for this paper. 

The 2-separator theorem can be stated in the decomposition version (as we did in the Introduction) 
as well as as the `constructive version'. For technical reasons we find it easier to work with the 
constructive version in the proofs and we will deduce the decomposition version in 
\autoref{sec:graph-deco}. We start by explaining the constructive version in this section. 

We recall the classical 2-separator theorem in the constructive version in full detail. This 
theorem has two aspects, the existential statement (which is the 
easy bit), and the 
uniqueness statement. 
The \emph{existential statement} says that every 2-connected graph $G$ can be constructed from 
3-connected graphs and cycles via 2-sums [\cite{oxley2},\S 83]. Clearly, 2-sums commute. Hence this 
sum is uniquely 
determined by the set of those summands that are basic; that is, they do not arise as a 2-sum 
of other summands. We refer to the set of basic summands as a \emph{decomposition} for 
$G$. 
We say that one decomposition for $G$ is 
\emph{coarser} (or smarter) than another if it has the same set of 3-connected graphs and its 
cycles can be build from cycles of the other decomposition (via the implicitly defined 
2-sums).
The \emph{uniqueness statement} says that there is a decomposition for $G$ with the universal 
property that it is coarser than any other decomposition for $G$.

\vspace{.3cm}

In analogy to 2-sums, we introduce the 
notion of \emph{$r$-local 2-sum}. This notion includes the usual 2-sums operation but additionally 
one is allowed to glue along edges of the same graph -- as long as they have distance at 
least $r$ (roughly speaking). We also introduce local 1-separators and local 2-separators and 
essentially\footnote{See \autoref{sec4} below for the complete definition.} define that a graph is 
\emph{locally 2-connected} if it has no local 1-separator; and 
`locally 
3-connected' is defined analogously.  All these terms carry the parameter `$r$' that measures how 
local this is (when the precise value of the parameter is not clear from the context, we shall 
write `$r$-local' in place of just `local'). The constructive version of \autoref{thm:main_intro}  
is the 
following. 

\begin{thm}\label{main_2sepr-intro}
 Every $r$-locally 2-connected graph can be constructed via $r$-local 2-sums from 
$r$-locally 3-connected graphs and cycles of length $\leq r$. 

There is such an $r$-local decomposition with the universal property that it is coarser than any 
other $r$-local decomposition for $G$. 
\end{thm}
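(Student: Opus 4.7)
The plan is to prove existence by induction on a suitable complexity measure (for instance, the number of edges of $G$), using \autoref{structure_torso_intro} as the base case. Let $G$ be connected and $r$-locally 2-connected. If every $r$-local 2-separator of $G$ is crossed by another, then \autoref{structure_torso_intro} tells us that $G$ is already $r$-locally 3-connected or a cycle of length at most $r$, and the singleton decomposition $\{G\}$ works. Otherwise, pick an $r$-local 2-separator $S$ of $G$ that is not crossed by any other, and express $G$ as an $r$-local 2-sum along $S$. By construction the summands should be strictly smaller in the chosen measure and should remain $r$-locally 2-connected, so the induction hypothesis provides decompositions for them; combining these via $r$-local 2-sums yields the desired decomposition of $G$ into $r$-locally 3-connected graphs and cycles of length at most $r$.

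For the universality (``coarsest'') statement, the natural candidate is the decomposition obtained by splitting $G$ simultaneously along \emph{all} $r$-local 2-separators that are not crossed by any other $r$-local 2-separator. The plan is to show: (i) this set of separators is pairwise uncrossed, so simultaneous splitting is well-defined and yields a valid $r$-local decomposition; (ii) each resulting summand is either $r$-locally 3-connected or a cycle of length at most $r$, by applying \autoref{structure_torso_intro} inside each piece and verifying that no ``new'' uncrossed $r$-local 2-separators of the summand correspond to $r$-local 2-separators of $G$ beyond those already used; and (iii) for any other $r$-local 2-sum decomposition of $G$, each separator used there must be uncrossed in $G$ and hence appears among those defining the canonical decomposition, so the canonical decomposition is coarser.

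The main obstacle I anticipate lies in step (iii) together with checking that ``splitting along $S$'' behaves well when $S$ is only \emph{locally} separating. In the classical setting, unpicking and regluing a 2-sum makes an uncrossed 2-separator visibly reappear in the original graph; but in the local setting one must carry data about neighbourhoods of radius $r$ across the gluing operation, and a naive splitting could destroy the short cycles that witness locality or introduce spurious local 2-separators in a summand that do not correspond to any separator of $G$. Handling this precisely is presumably the role of \autoref{minimal_N} promised in \autoref{sec:uni}: namely a compatibility lemma stating that the $r$-local 2-separators of a summand either pull back to $r$-local 2-separators of $G$ or are artefacts localised near the gluing separator that can be absorbed into a cycle summand of length at most $r$. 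Establishing this pull-back/push-forward compatibility, and with it a form of commutativity and associativity of $r$-local 2-sums, is, I expect, the main technical work of the proof.
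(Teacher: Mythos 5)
Your high-level plan matches the paper's: the canonical decomposition is obtained by cutting along the set $\Ncal$ of $r$-local 2-separators not crossed by any other, \autoref{structure_torso_intro} certifies the pieces are basic, and the heavy lifting is a pull-back/push-forward compatibility between $r$-local 2-separators of a graph and of the graph obtained by cutting (in the paper these are \autoref{recur_gd0_ONESTEP} and \autoref{lifting-lem}, fed into \autoref{minimal_N}). So your outline of parts (i)--(ii) and your prediction of where the difficulty lies are both on target. However, there are two genuine gaps. First, the induction measure for existence: cutting an $r$-local 2-separator \emph{adds} torso edges (and extra slice vertices), so the number of edges of each summand is not in general smaller than that of $G$ --- indeed, when the punctured explorer-neighbourhood has exactly two components and the result stays connected, the new graph has strictly more edges and vertices than $G$ and the same cycle-space dimension. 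The paper handles this in \autoref{ex2} with a bespoke well-founded ``triplex'' ordering $(\gamma,-\overline\gamma,v)$, where $\gamma$ is the cycle-space dimension, $\overline\gamma$ the dimension of the span of short cycles, and $v$ the vertex number; proving that this strictly decreases (\autoref{cases}, \autoref{gamma_bar}) is not a triviality, and no naive count of edges or vertices works.

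Second, your step (iii) has the key inclusion backwards. You claim that any separator used in another valid decomposition must be uncrossed in $G$, hence lie in $\Ncal$. This is false: a cycle of length at most $r$ (which is already a basic summand) has many $r$-local 2-separators, all crossed, and one can legitimately cut along them to produce a finer decomposition into shorter cycles. What is true, and what \autoref{minimal_N} actually proves, is the opposite inclusion: \emph{every} valid sequence of $r$-local cuttings must, at some stage, cut at every $X\in\Ncal$ (or a lift thereof). The canonical decomposition uses exactly $\Ncal$, so every other valid decomposition uses a superset and is therefore finer; from this, together with the preservation of the $r$-locally 3-connected pieces (which admit no further cuts), the coarseness statement follows. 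The proof of this superset claim relies on propagating the uncrossed separator $X\in\Ncal$ along the whole cutting sequence via the Lifting Lemma and \autoref{lift_non-crossing}, and observing that the end graph has no uncrossed local 2-separator left, forcing $X$ to have been cut somewhere --- an argument rather different from the one you sketch.
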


\begin{rem}
 As for the classical 2-separator theorem, our local 2-separator theorem has two parts; the first 
sentence gives the existential statement and the second is the uniqueness statement. The uniqueness 
statement is more difficult to prove. 
\end{rem}

\vspace{.3cm}

We continue by defining some of the basic notions for this paper rigorously. 
How do we define local cutvertices? 
Roughly, a vertex should be a local cutvertex if the ball around it gets disconnected after its 
removal. But which definition of ball do we take? Do we take the definition where we allow edges in 
the ball joining two vertices of maximum distance or not? Answer: we take both definitions, as 
formalised in \autoref{dfn:ball}. Informally, the ball around a vertex consists of all edges and 
vertices on closed walks of bounded length starting at that vertex.

\begin{dfn}\label{dfn:ball}
 Given a graph $G$ with a vertex $v$ and an integer $s$, the \emph{ball} of radius $s$ around the 
vertex $v$ is the induced subgraph of $G$, whose vertices are those of distance at most $s$ from 
$v$ and without all edges joining two vertices of distance precisely $s$.
Similarly, given a half-integer $s+\frac{1}{2}$, the \emph{ball} of radius $s+\frac{1}{2}$ around 
the 
vertex $v$ is the induced subgraph of $G$, whose vertices are those of distance at most $s$ from 
$v$. We denote the ball of radius $s$ around $v$ by $B_s(v)$. 
Below we will often consider the graph $B_s(v)-v$, to which we refer as a \emph{punctured ball}.
Given a parameter $r\in \Nbb\cup\{\infty\}$, a vertex $v$ is an \emph{$r$-local cutvertex} of $G$ 
if it 
separates the ball of radius $r/2$ around $v$; formally: $B_{r/2}(v)-v$ is 
disconnected.\addcontentsline{toc}{subsection}{Dfn: ball}
\end{dfn}

\begin{lem}\label{gen}
Given a parameter $r\in \Nbb$ and a graph, all cycles of the subgraph $B_{r/2}(v)$ are generated 
by the cycles of 
length at most $r$.  
\end{lem}

\begin{proof}
 Construct a spanning tree of $B_{r/2}(v)$ rooted at $v$ so that a vertex 
with distance $d\leq r/2$ in $G$ from $v$ has distance $d$ in the spanning tree; this 
can easily be done by induction building the spanning tree layer by layer. Every fundamental cycle 
of this spanning tree has length at most $r$ (if $r$ is even, note that there is no edge between 
two vertices of distance $r/2$ from $v$. And if $r$ is odd note that there are no vertices of 
distance exactly $r/2$ from $v$). As every cycle is generated by the fundamental cycles, the cycles 
of length at most $r$ generate. 
\end{proof}

\begin{rem}
The bound $r$ in \autoref{gen} is sharp as can be seen by considering graphs $G$ 
that are equal to cycles of length $r$. 
\end{rem}

Informally speaking, the `2-sums operation' on graphs can be seen as the inverse operation of 
cutting along 2-separators and taking torsos. In the following we will introduce a  
local version of the `2-sums operation' on graphs.

\begin{dfn}[Local 2-sum]
 Given a family of weighted graphs $(G_i|i\in [n])$ and a set of weighted directed edges $e_i$ of 
$G_i$,
the \emph{local 2-sum} of this family is the graph obtained from the disjoint union of the 
set of 
graphs $\{G_i|i\in [n]\}$ by identifying the start-vertices of the edges $e_i$, and the 
terminal vertices 
of the edges $e_i$, and then deleting all edges $e_i$. 
For this local 2-sum to be valid, it must further satisfy the following condition for each $i\in 
[n]$. For each $i\in [n]$, we denote by $\gamma_i$ the length of the shortest path between the 
endvertices of the edge $e_i$ in the graph $G_i-e_i$. By $\delta_i$ we denote the minimum of the 
values $\gamma_j$ for $j\neq i$. 
We now further require that the length of the edge $e_i$ is equal to 
$\delta_i$.\addcontentsline{toc}{subsection}{Dfn: Local 2-sum}
\end{dfn}

\begin{rem}
 We stress that the graphs $G_i$ just form a family, so some of them may coincide, but the edges 
$e_i$ 
form a set, so they must all be distinct. In the disjoint union of the set of graphs $G_i$ we only 
have one copy for every graph, no matter how often it appears in the family. 
\end{rem}

Often, we will not explicitly specify a direction of the edges $e_i$ but assume it is given 
implicitly by the context or just take an arbitrary choice.

We say that a local $2$-sum is \emph{$r$-local} if any pair consisting of two starting-vertices or 
two terminal vertices, respectively, of edges $e_i$ and $e_j$ that live 
in the same host graph $G_i=G_j$ have distance at least $r+1$.

\begin{rem}
While in this section we have been working with graphs whose edges are assigned positive integer 
lengths 
bounded by $r$, in the rest of the paper all graphs have no weights on their edges. This is 
essentially the same; indeed, to get from 
such a weighted graph to a genuine graph just replace each weighted edge by a path of the same 
length. 
\end{rem}

\section{Explorer neighbourhood}\label{sec4}

In this section we define local 2-separators and explain the motivation behind our definition. 

The notion of local 1-separators has been explained above. But how should one define local 
2-separators? The first thing is that perhaps one only might want to consider pairs of vertices as 
local 2-separators if they have bounded distance between them. Indeed, otherwise if they were 
separating we would rather like to think about them as each being a local 1-separator. Okay, so we 
have a pair $(v,w)$ of vertices of bounded distance that separates their neighbourhood. But how do 
we define their neighbourhood precisely? Something that looks almost right is just picking one of 
the vertices arbitrarily and taking a ball around them. More precisely, one could require that 
$B_{r/2}(v)-v-w$ is disconnected for some parameter $r$. However, it could be that when we swap $v$ 
and 
$w$ then it switches from disconnected to connected. So perhaps the next attempt 
would be to take $(B_{r/2}(v)\cup B_{r/2}(w))-v-w$; just to make it symmetric in $v$ and $w$. Below 
we will 
refer to this long expression as the \emph{punctured double-ball}. 

The disadvantages of this definition, although almost correct, are more subtle. The main reason is 
perhaps that with that definition our proofs do not seem to work, as important structural 
properties are simply not true. Indeed, with this 
definition \myref{corner-lemma} does not work. 
This lemma is a natural generalisation of a lemma for usual separators, and we believe that any 
natural notion of local separators should have this property. 
The 
reason why that lemma is not true in this case is that 
the double-ball $B_{r/2}(v)\cup 
B_{r/2}(w)$ may contain cycles that are composed of a path from the ball $B_{r/2}(v)$ and from 
$B_{r/2}(w)$ but 
are not a cycle of either of these two balls, see \autoref{double-ball}.

   \begin{figure} [htpb]   
\begin{center}
   	  \includegraphics[height=4cm]{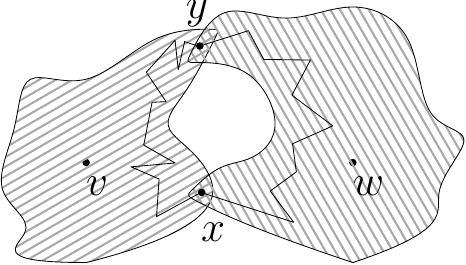}
   	  \caption{The balls $B_{r/2}(v)$ and $B_{r/2}(w)$ are marked by grey stripes, 
in rising and falling patters, respectively. Two paths between the vertices $x$ 
and $y$, one from either ball, form a cycle that is contained in neither ball.} \label{double-ball}
\end{center}
   \end{figure}

Informally speaking, the definition we take is similar to the double ball $B_{r/2}(v)\cup 
B_{r/2}(w)$ and actually agrees with it up to distance $r-d$, where $d$ is the distance between the 
vertices $v$ and $w$ -- but towards the boundary it `gets more fuzzy'. We will call our notion of 
neighbourhood `explorer-neighbourhood' and think about it as follows:
imagine two explorers discovering the graph starting from the vertices $v$ and $w$ respectively,  
with the goal of 
separately discovering the graph and at the end combining their maps of the balls $B_{r/2}(v)$ and 
$B_{r/2}(w)$ into a single map. First they discover all shortest paths between the 
vertices 
$v$ and $w$ together and put them on the common map. 
 We refer to the set of vertices on 
these paths as the \emph{core}. Then they return to 
their respective starting vertices and start 
exploring the graph from there up to 
distance $r/2$. On their map they mark each vertex  by the set of shortest paths to that vertex 
from  the core (within their respective balls). There may be vertices with distance $r/2$ from 
the core that have distance at 
most $r/2$ to 
the vertex $v$ but a larger distance to the vertex $w$. Such vertices are only discovered by the 
explorer based at $v$. There may also be vertices $u$ discovered by both explorers. However they 
might not discover a common shortest path to that vertex. In this case there will be two copies 
of that vertex in the explorer-neighbourhood, while 
there is only one copy in the double ball $B_{r/2}(v)\cup B_{r/2}(w)$.

\begin{dfn}[Explorer-neighbourhood] 
 Now we give a formal definition of the \emph{explorer-neighbourhood} of parameter $r$ in a 
graph $G$ with 
explorers 
based at the vertices $v$ and $w$ with distance\footnote{In this paper the 
explorer-neighbourhood of vertices $v$ and $w$ of distance more than $\frac{r}{2}$ is undefined; 
and hence throughout the paper in statements where the explorer-neighbourhood is mentioned we have 
implicitly the assumption that the involved vertices have distance at most $\frac{r}{2}$.} at most 
$\frac{r}{2}$. The \emph{core} is the set of all vertices on shortest 
paths between the 
vertices $v$ and $w$.  
We take a copy of the ball $B_{r/2}(v)$ where we label a vertex $u$ with the set 
of shortest paths from the core to $u$ contained in the ball $B_{r/2}(v)$. Similarly, we 
take a copy of the ball $B_{r/2}(w)$ where we label a vertex $u$ with the set 
of shortest paths from the core to $u$ contained in the ball $B_{r/2}(w)$. Now we take the union 
of 
these two labelled balls -- with the convention that two vertices are identified if they have a 
common label in their sets; that is, there is a shortest path from the core to that vertex 
discovered by both explorers. 
(Note that the same vertex $x$ of $G$ could be in both balls but the label sets could be 
disjoint, see \autoref{fig:exex}. In this case there would be two copies of that vertex in the 
union. 
In such a case the 
union would not be a subgraph of the original graph).
We denote the explorer neighbourhood by $\expl(v,w)$. This completes the definition of 
explorer neighbourhood. \addcontentsline{toc}{subsection}{Dfn: Explorer-neighbourhood}
\end{dfn}

   \begin{figure} [htpb]   
\begin{center}
   	  \includegraphics[height=3cm]{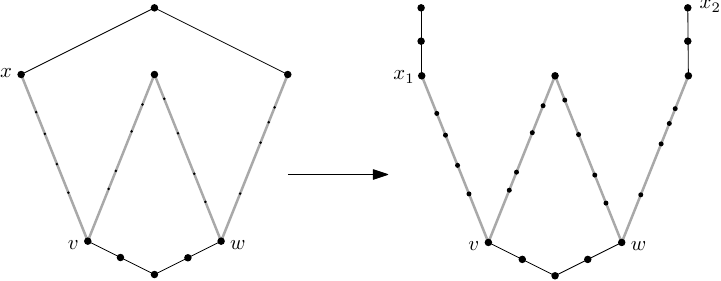}
   	  \caption{On the right we depicted the explorer-neighbourhood $\expl(v,w)$ of the graph on 
the left. The value for $r/2$ is seven. 
Here the grey paths all have length equal to $(r/2)-2$.
The core is just the path of length four between $v$ and $w$. The 
cycle of 
length $r$ is still a cycle in $\expl(v,w)$ since as a cycle it is included in both $B_{r/2}(v)$ 
and 
$B_{r/2}(w)$, see \autoref{unique_copy_extended} for details. The cycle of length $r+2$ is not 
contained in one of the balls $B_{r/2}(v)$ or $B_{r/2}(w)$ and hence some of its vertices get two 
copies in 
$\expl(v,w)$. 
Indeed, the vertex $x$ has distance at most $r$ from both vertices $v$ and $w$. Still it has the 
two copies 
$x_1$ and $x_2$ in the explorer-neighbourhood. 
} \label{fig:exex}
\end{center}
   \end{figure}

   \begin{lem}\label{unique_copy}
Given two vertices $a_1$ and $a_2$ of distance at most $r/2$, all vertices on shortest paths 
between 
$a_1$ and $a_2$ and edges incident with such vertices have unique copies in 
$\expl(a_1,a_2)$. 

In particular, edges incident with vertices of the core have unique copies in $\expl(a_1,a_2)$.
\end{lem}

\begin{proof}
 By definition vertices on shortest paths between $a_1$ and $a_2$ have unique 
copies in 
$\expl(a_1,a_2)$. 
Let $e$ be an edge one of whose endvertices is in the core. 
If both endvertices of $e$ are in the core, $e$ has a unique copy.
Otherwise, the edge $e$ is a shortest path from the core to the other endvertex. 
Clearly, the edge $e$ is in one of the balls $B_{r/2}(a_1)$ or $B_{r/2}(a_2)$.
If it is in both balls, then the two copies of its endvertex must agree as they are both labelled 
with the edge $e$. 
 
 The `In particular'-part follows immediately. 
\end{proof}

\begin{eg}
\autoref{unique_copy} implies that neighbours of vertices in the core have unique copies \lq most 
of the time\rq. 
Here we give an example of a graph where neighbours of vertices in the core do not have unique 
copies. 
Let $C$ be a cycle of length $r+1$, where $r$ is an even number. Let $a_1$ and $a_2$ be two 
vertices on $C$ of distance $\frac{r}{2}$. Then the neighbours of $a_{i}$ with distance 
$\frac{r}{2}$ to $a_{i+1}$ do not have unique copies in $\expl(a_1,a_2)$; indeed, $\expl(a_1,a_2)$ 
is a path of length $\frac{3r}{2}$.
\end{eg}

\begin{lem}\label{unique_copy_extended}
Let $o$ be a cycle (or more generally a closed walk) of length at most $r$ containing 
vertices $a_1$ and $a_2$. Vertices of $o$ have unique copies in $\expl(a_1,a_2)$.
\end{lem}

\begin{proof}
Let $o$ be a closed walk as in the statement of the lemma, and let $x$ be an arbitrary vertex on 
$o$. 
Let $S$ be a shortest path from $x$ to the core in the underlying graph (not just some subballs).
We will show that $S$ is completely included in both balls $B_{r/2}(a_1)$ and $B_{r/2}(a_2)$.
By symmetry, it suffices to show that $S$ is completely included in $B_{r/2}(a_1)$.

For any pair of vertices of the set $\{a_1,a_2,x\}$, pick a shortest path between these vertices. 
Let $o'$ be the closed walk obtained by concatenating these three paths. Let $y$ be the endvertex 
of the path $S$ on the core. 
We can pick, and we do pick, the shortest path between $a_1$ and $a_2$ so that it contains the 
vertex $y$. Hence the vertex $y$ is on the closed walk $o'$. 
As the closed walk $o$ also contains the vertices $a_1$, $a_2$ and $x$, its length is at least that 
of the closed walk $o'$; that is, the closed walk $o'$ has length at most $r$. 

Let $o''$ be the closed walk obtained by concatenating a shortest path from $a_1$ to $x$, the path 
$S$ and a shortest path from the vertex $y$ to $a_1$. Such a closed walk $o''$ can be obtained from 
the closed walk $o'$ by replacing a subwalk from $x$ via $a_2$ to $y$ by the path $S$. As $S$ is 
a shortest path between its endvertices, the length of $o''$ is at most that of $o'$; and thus at 
most $r$. Hence the closed walk $o''$ is completely contained within the ball $B_{r/2}(a_1)$ around 
$a_1$. Thus the shortest path $S$ is contained in that ball. As $S$ was chosen arbitrarily, every 
shortest path from $x$ to the core is included in the ball $B_{r/2}(a_1)$. By symmetry, the same is 
true for `$a_2$' in place of `$a_1$'.
Thus $x$ has a unique 
copy in the explorer-neighbourhood $\expl(a_1,a_2)$.

\end{proof}

The balls $B_{r/2}(v)$ and $B_{r/2}(w)$ are embedded within the explorer 
neighbourhood by construction. We refer to these embedded balls as $\iota(B_{r/2}(v))'$ 
and 
$\iota(B_{r/2}(w))'$, or simply $B_{r/2}(v)'$ and 
$B_{r/2}(w)'$ if the embedding map $\iota$ is clear from the context.

\begin{rem}\label{new_rem}
Every cycle of $\expl(a_1,a_2)$ of length at most $r$ containing one of the vertices 
$a_1$ or $a_2$, say $a_1$, is a cycle of $G$. Indeed, it is contained in the ball of radius $r/2$ 
around $a_1$ and as such a cycle of $G$. This can be seen as a converse of 
\autoref{unique_copy_extended}, and we shall use this observation in various places throughout the 
paper. 
\end{rem}

\begin{lem}\label{cycle_gen}
 Every cycle $o$ of the explorer neighbourhood $\expl(v,w)$ is generated from the cycles of the 
embedded balls $B_{r/2}(v)'$ and $B_{r/2}(w)'$.
\end{lem}

\begin{proof}
Each vertex of the cycle $o$ is a vertex of $B_{r/2}(v)'$ or $B_{r/2}(w)'$. We mark it with the 
respective vertex $v$ or $w$; and if it is in both, we mark it with both vertices $v$ and $w$. 
For each vertex $x$ on the cycle $o$ marked by a vertex $y\in \{v,w\}$, we pick a shortest path 
from $x$ to the core within the ball $B_{r/2}(y)'$. If a vertex is marked with $v$ and $w$ by the 
definition of explorer-neighbourhood, then we can assume, and we do assume, that we picked the same 
path 
for $y=v$ and $y=w$. 

Now for each edge $e\in o$ we construct a closed walk $o_e$ as follows. Start with $e$ and the two 
paths 
chosen at either endvertex of $e$, then join their endvertices in the core by a path within the 
core (which is connected by construction). 
Since for each edge $e$ of $o$, there is a mark $y\in \{v,w\}$ that is present at 
both endvertices of edge $e$, the closed walk $o_e$ is contained in $B_{r/2}(v)'$ or 
$B_{r/2}(w)'$.

Our aim is to generate the cycle $o$ from cycles of $B_{r/2}(v)'$ and $B_{r/2}(w)'$. For that we 
first add 
to $o$ the sum of all the cycles $o_e$ ranging over 
all $e\in o$ (taken over the binary field $\Fbb_2$). This sum takes only non-zero entries at edges 
of the core. As the core is a subset of $B_{r/2}(v)'\cap B_{r/2}(w)'$, the remainder is generated 
from the common 
cycles of  $B_{r/2}(v)'$ and $B_{r/2}(w)'$.
\end{proof}

\begin{dfn}[Local separators]
Given a graph $G$ with distinct vertices $v$ and $w$, we say that the set $\{v,w\}$ is an 
\emph{$r$-local 2-separator} if the punctured explorer-neighbourhood 
$\expl(v,w)-v-w$ is disconnected, and the vertices $v$ and $w$ have distance at most $r/2$ in the 
graph $G$. 

A connected graph is \emph{$r$-locally 2-connected} if it does not have an $r$-local cutvertex and 
it has a cycle of length at most $r$.
So there are no $r$-locally $2$-connected graphs for $r<3$. 
A graph is \emph{$r$-locally $2$-connected} if all its components are $r$-locally $2$-connected.

A connected $r$-locally 2-connected graph is \emph{$r$-locally 3-connected} if it does not have an 
$r$-local $2$-separator 
and it has 
at least four vertices.
A graph is \emph{$r$-locally $3$-connected} if all its components 
are $r$-locally $3$-connected. \addcontentsline{toc}{subsection}{Dfn: Local separators}
\end{dfn}

\begin{eg}
 A cycle of length $r+1$ is not $r$-locally $2$-connected. Moreover if it has more than three 
edges, any of its vertices together with any of its neighbours forms an $r$-local 
$2$-separator. 

Cycles of lengths at most $r$ are $r$-locally $2$-connected and not $r$-locally 3-connected if they 
have at least four edges. 
\end{eg}

\begin{lem}
An $r$-locally $3$-connected graph $G$ is $r$-locally $2$-connected.
\end{lem}

\begin{proof}
Assume that the graph $G$ is connected, has at least four vertices and contains a cycle. 
We are to show that if $G$ has an $r$-local cutvertex, then $G$ has an $r$-local $2$-separator.
So let $x$ be an $r$-local cutvertex. If $x$ is not contained in any cycle, $x$ is a genuine 
cutvertex of the graph $G$. 
As the graph $G$ is not a star by assumption, the vertex $x$ has a neighbour so that $x$ together 
with that neighbour is a $2$-separator of $G$. So $G$ has an $r$-local $2$-separator. 

So we may assume that $x$ is contained in a cycle $o$. Let $y$ be a neighbour of the vertex $x$ on 
the cycle $o$. 
We claim that $\{x,y\}$ is an $r$-local $2$-separator. 
Let $C$ be a component of the punctured ball $B_{\frac{r}{2}}(x)-x$ that does not contain the vertex 
$y$. 
Let $W$ be the set of edges incident with the vertex $x$ and the other endvertex in the component 
$C$. 
Let $z$ be a neighbour of $x$ in $C$. 

Suppose for a contradiction that the punctured explorer-neighbourhood $\expl(x,y)-x-y$ is 
connected. 
Then in particular, $\expl(x,y)-x$ is connected. So there is a path $P$ in there from $y$ to $z$. 
Then $P+xz$ is a cycle traversing the edge set $W$ precisely once. By \autoref{gen} and 
\autoref{cycle_gen} $P+xz$ is generated by cycles of length at most $r$ over $\Fbb_2$; hence one of 
these cycles intersects the edge set $W$ oddly. In particular, this cycle contains the vertex $x$, 
so it is a cycle of the ball $B_{\frac{r}{2}}(x)$. So we found a cycle of $B_{\frac{r}{2}}(x)$ that 
intersects the cut $W$ oddly. This is a contradiction. Hence $\expl(x,y)-x-y$ is disconnected; and 
so $\{x,y\}$ is an $r$-local $2$-separator.
\end{proof}

In a sense the next lemma says that local 2-components sitting at a local 2-separator are local 
(in that they contain a short path between the neighbours of the two separating vertices).

\begin{lem}[Local 2-Connectivity Lemma]\label{local_is_very_local}
Let $\{v,w\}$ be an $r$-local 
2-separator in an $r$-locally 2-connected graph $G$.
For every connected component $k$ of the punctured explorer-neighbourhood $\expl(v,w)-v-w$, 
there is a cycle $o'$ of length at most $r$ containing the vertices $v$ and $w$, and 
$o'$ contains a vertex of the component $k$ and 
$o'$ contains an edge incident with $v$ whose other endvertex is a vertex not in $k$. 
\end{lem}

\begin{proof}
 Let $k=k_1$ be an arbitrary component of the punctured explorer-neighbourhood $\expl(v,w)-v-w$, 
and 
let $k_2$ be the union of all 
other components of the punctured explorer-neighbourhood $\expl(v,w)-v-w$, which is nonempty as 
$\{v,w\}$ is a local 
2-separator. 
If one component of $\expl(v,w)-v-w$ had only one of the vertices $v$ and $w$ in its neighbourhood, 
then that vertex would be a local cutvertex. However, this is not possible as $G$ is $r$-locally 
2-connected by assumption. Hence all components of $\expl(v,w)-v-w$ have both vertices $v$ and $w$ 
in their neighbourhood. 
In particular, the vertex $v$ is adjacent to 
vertices of $k_1$ and $k_2$. 

Let $x_i$ be an arbitrary neighbour of the vertex $v$ in $k_i$ (for $i=1,2$). As the graph $G$ is 
$r$-locally 
2-connected, the vertex $v$ is not a cutvertex of the ball 
$B_{r/2}(v)$. So there is a path $P$ included in that ball from $x_1$ to $x_2$ avoiding $v$.
Let $o$ be the cycle obtained from $P$ by adding the vertex $v$. 
By \autoref{gen}, the cycle $o$ is generated from cycles of the ball $B_{r/2}(v)$ of length at most 
$r$. Consider the set $\Ccal$ of these cycles that contain the vertex $v$. As $o$ has precisely 
one edge to $k_1$ incident with $v$, 
there must 
be a cycle $o'$ in $\Ccal$ that contains an odd number of edges to $k_1$ 
incident with $v$. As the cycle $o'$ has maximum degree two, it contains precisely one edge to  
$k_1$ 
incident with $v$. The other edge of $o'$ incident with $v$ has its other endvertex in $k_2+w$. 
This completes the proof.
\end{proof}

\begin{rem}
 The bound $r$ for the cycle $o'$ in \myref{local_is_very_local} is best possible as can be seen 
by considering graphs that are a single cycle of length $r$. The cycle $o'$ in 
\autoref{local_is_very_local} is not only a cycle in $\expl(v,w)$ but also in $G$ by 
\autoref{new_rem}. 
\end{rem}

\begin{rem}
 The cycle $o'$ of \myref{local_is_very_local} contains an edge incident with $v$ whose other 
endvertex is not in $k$; that is, this other endvertex is equal to the vertex $w$ or else in a 
component of $\expl(v,w)-v-w$ different from $k$. 
\end{rem}

\begin{rem}\label{expl_explained}
 The notion of the explorer-neighbourhood is crucial in the proof of   \myref{recur_gd0_ONESTEP} 
and \myref{corner-lemma} below. This is explained in detail in \autoref{rem77} and 
\autoref{corner_{r/2}em} below. \end{rem}

\begin{rem}
Above we said the explorer-neighbourhood and the double-ball `almost lead' to the same notion of 
local 2-separator. This can be quantified as follows. If the punctured explorer-neighbourhood is 
connected, then so is the punctured double ball. If the punctured double ball of radius $r/2$ 
around 
two vertices of distance at most $d$ is connected, then the punctured explorer-neighbourhood of 
radius $(r/2)+d$ is connected. 
 
\end{rem}

\section{Intermezzo: Block-Cutvertex Graphs}\label{sec:blockcut}

The results of this section are not applied in the rest of the paper but they can be seen as a toy 
case for the main result. 
In this section we prove a generalisation of the block-cutvertex theorem allowing for 
$r$-local cutvertices, which generalise cutvertices (indeed, the $r$-local cutvertices for 
$r=\infty$ are precisely the cutvertices). See \autoref{sec2}for a defintion of $r$-local 
cutvertices and \autoref{sec:graph-deco} for a definition of graph-decompositions. 

It seems to us that the most natural generalisation of the block-cutvertex theorem to this 
context is the following.

\begin{thm}\label{block-cut}
 Given $r\in \Nbb\cup\{\infty\}$, every connected graph has a graph-decomposition of adhesion one 
and locality $r$
such that all its bags are $r$-locally 2-connected or single edges. 
\end{thm}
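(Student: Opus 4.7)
The plan is to mirror the classical block-cutvertex theorem, adapted via a simplified version of the $r$-local cutting of \autoref{sec:ex}. Given an $r$-local cutvertex $v$, define the \emph{$r$-local cutting at $v$} by replacing $v$ with one slice $v_k$ for each component $k$ of the punctured ball $B_{r/2}(v)-v$, where $v_k$ inherits exactly the edges from $v$ to vertices of $k$; no torso edges are required, since the adhesion is one. The direct analogue of \autoref{cut_far} holds: slices of the same cutvertex have distance at least $r+1$ in the cut graph, because any path between two slices corresponds to a walk in $G$ through $v$ whose interior joins two different components of $B_{r/2}(v)-v$ and therefore must leave the ball.

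Next I would iterate this cutting until no $r$-local cutvertex remains in any component. By Euler's formula, if $v$ is an $r$-local cutvertex with $\ell \geq 2$ slices and the cut splits the component of $v$ into $k' \leq \ell$ pieces, then the edge set is preserved, the vertex count grows by $\ell-1$, and the total cycle-space dimension changes by $k'-\ell \leq 0$. Ordering states lexicographically by (total cycle-space dimension, $-$ number of components), each cut either strictly decreases the first coordinate (the case $k' < \ell$) or keeps it fixed and strictly decreases the second (the case $k' = \ell$). Since cycle-space dimension is nonnegative and the number of components is bounded above by $|E(G)|$, this measure is well-founded and the process halts.

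Third, I would identify the terminal bags as $r$-locally 2-connected subgraphs or single edges. A terminal component $C$ has no $r$-local cutvertex. If $C$ contains a cycle of length at most $r$ it is $r$-locally 2-connected by definition. Otherwise I claim $C$ is a single edge. Indeed, by \autoref{gen} the cycle space of the ball $B_{r/2}(v)$ around any vertex $v$ of $C$ is generated by cycles of length at most $r$, none of which exist in $C$, so $B_{r/2}(v)$ is a forest. If some vertex $v$ of $C$ had degree at least two in $C$ then $v$ would have degree at least two in $B_{r/2}(v)$, and removing $v$ would disconnect this tree, making $v$ an $r$-local cutvertex, a contradiction. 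Thus every vertex of $C$ has degree at most one and $C$ is a single edge. A short commutativity argument analogous to \autoref{cuttings commute} shows that cuttings at distinct $r$-local cutvertices commute, so the final graph $G^\ast$ is independent of the order of operations.

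Finally, I would assemble the graph-decomposition in direct analogy with \autoref{nested_to_td}: the decomposition graph is bipartite, with bags being the components of $G^\ast$ and local separators being the $r$-local cutvertices used during the cutting, each joined to the bags containing its slices via the canonical one-vertex embedding. Adhesion one is immediate; for locality $r$, any cycle of $G$ traversing a separator $v$ oddly must contain two edges incident to distinct slices of $v$ in $G^\ast$, and these slices are at distance at least $r+1$, so the cycle has length strictly more than $r$. The main obstacle is the characterisation of terminal bags, which hinges on \autoref{gen}; the remainder of the argument is strictly simpler than the 2-separator case because no torso edges, no Corner Lemma, and no Alternating Cycle Lemma are needed.
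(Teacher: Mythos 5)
Your proof is correct and establishes the theorem, but it follows a genuinely different route from the one in the paper's Appendix~II. The paper's argument is more structural: it first records the observation that after $r$-locally cutting a vertex $v$, no slice of $v$ is ever an $r$-local cutvertex of the resulting graph (\autoref{no_cut_vertex}), and then proves that $r$-local cuttings of vertices commute (\autoref{commute}, whose proof itself rests on \autoref{gen}). Combining these, the paper cuts all $r$-local cutvertices of $G$ at once and, by rearranging the cutting order so that any chosen vertex is cut last, concludes in \autoref{cut_all1} that no vertex of the resulting graph is an $r$-local cutvertex --- termination is trivial because only the original cutvertices of $G$ are ever cut. You instead iterate greedily and establish termination by a well-founded lexicographic measure on the cycle-space dimension and the number of components, which is essentially the technique the paper deploys in Appendix~I for the \emph{existential} half of the 2-separator theorem (\autoref{existential}, \autoref{cases}). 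Your approach is more mechanical and does not need \autoref{no_cut_vertex}, at the cost of the bookkeeping in the measure argument; both are sound, and both still rely on \autoref{commute}-style commutativity to make the final decomposition well defined. Two things in your write-up are in fact slightly more careful than the paper's: you give an explicit argument (via \autoref{gen}, balls being trees, and degree-$\geq 2$ vertices in trees being cutvertices) that a terminal component with no short cycle is a single edge, whereas the paper's \autoref{cut_all1} asserts local 2-connectedness of the cut graph without spelling out the single-edge exception; and you verify locality by observing that a cycle traversing a separator $\{v\}$ oddly becomes, after cutting $v$ alone, a walk between two slices of $v$ that are at distance at least $r+1$. One small caveat: when arguing that slices of $v$ are at distance at least $r+1$, one should take the first slice of $v$ that a shortest slice-to-slice path revisits (so the interior of the relevant subpath truly avoids $v$ in $G$); and the conclusion ``degree at most one forces a single edge'' tacitly uses that every vertex retains degree at least one throughout the cutting, which is worth saying (it holds because $G$ is connected and every slice inherits at least one edge). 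Neither caveat is a real gap.
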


\begin{rem}
 The strengthening of \autoref{block-cut} with `bags are $r$-locally 2-connected' replaced by `bags 
are $r$-locally 2-connected subgraphs' is not true. An example is given in \autoref{fig:gluing1}.
\end{rem}

As a preparation for the proof of \autoref{block-cut}, we investigate the operation of locally 
cutting vertices, defined as follows. 

Given a parameter $r\geq 1$ and a graph $G$ with a vertex $v$, the graph 
obtained from $G$ by \emph{$r$-locally cutting} the vertex $v$ is defined as follows. 
Let $X$ be the set of connected components of the ball of radius $r$ around $v$ with $v$ removed; 
formally $X$ is the set of components of the graph $B_{r/2}(v)-v$. 
Define a new graph from $G$ by replacing the vertex $v$ by one new vertex for each element of the 
set $X$, where the vertex labelled with $x\in X$ inherits the incidences with those edges incident 
with $v$ that are incident with a vertex of the connected component $X$. 
We refer to the new vertices as the \emph{slices of $v$}. 
This completes the 
construction of the  $r$-local cutting of $G$. 

\begin{obs}\label{no_cut_vertex}
 Let $G'$ be obtained from $G$ by $r$-locally cutting a vertex $v$ into a set $X$ of new 
vertices. 
Then in the graph $G'$, no vertex $x\in X$ is an $r$-local cutvertex. 
\qed
\end{obs}

The next lemma says that  $r$-local cuttings commute.

\begin{lem}\label{commute}
Given a graph $G$ with vertices $v$ and $w$, first 
$r$-locally cutting $v$ and then $w$ results in the same graph as first locally cutting $w$ and 
then $v$. 
\end{lem}

\begin{proof}

Consider the graph $G'$ obtained from $G$ by $r$-locally cutting the vertex $v$.
We denote the  ball of radius $r/2$ around the vertex $w$ in the graph $G$ by $B_{r/2}(w)$, and by  
$B_{r/2}'(w)$ we denote the ball of radius $r/2$ around the vertex $w$ in the graph $G'$. 

In the graphs $G$ and $G'$, the vertex $w$ has the same neighbours. Indeed, if $v$ and $w$ are not 
adjacent, this is immediate. Otherwise $w$ is adjacent with a unique slice of $v$ in $G'$, and 
in the following we will suppress a bijection between the vertex $v$ and this particular slice 
of $v$ -- in order to simplify notation. 
With this notation at hand, we next prove the following.

\begin{sublem}\label{same_com}
 Two neighbours $x$ and $y$ of $w$ are in the same connected component of $B_{r/2}(w)-w$ if and 
only if 
they are in the same connected component of $B_{r/2}'(w)-w$.
\end{sublem}

\begin{proof}
If $x$ and $y$ are in the same connected component of $B_{r/2}'(w)-w$, they are joined by a path in 
that graph and this path is also is a path (or a walk) in the graph $B_{r/2}(w)-w$. 

Hence conversely assume that $x$ and $y$ are vertices of the same connected component of the 
ball $B_{r/2}(w)-w$. Let $P$ be a path between these two vertices in the graph $B_{r/2}(w)-w$. Then 
this path $P$ together with the vertex $w$ forms a cycle, which we denote by $o$. By \autoref{gen}, 
the cycle $o$ is generated by cycles of length at most $r$ in the graph $B_{r/2}(w)-w$. 

If one of these cycles does not include the vertex $v$, then it is also a cycle in the graph 
$B_{r/2}'(w)$. Otherwise, such a cycle is also a cycle completely contained with in the ball 
$B_{r/2}(v)$ 
around $v$ in $G$. In particular this cycle witnesses that the two neighbours on that cycle 
adjacent to $v$ are in the same connected component of  $B_{r/2}(v)-v$. Thus these two neighbours 
are 
neighbours of the same slice of the vertex $v$ in $G'$. Hence this cycle is also a cycle in $G'$ 
and hence in the ball $B_{r/2}'(w)$. To summarise, all those cycles of length at most $r$ that 
generate $o$ are cycles in $B_{r/2}'(w)$. In the ball $B_{r/2}'(w)$ they generate (the edge set of) 
$o$. So 
$o$ is an eulerian subgraph in $B_{r/2}'(w)$, and so a cycle as it cannot have a vertex of degree 
strictly more than two and it is connected. In particular the vertices $x$ and $y$ are in the same 
connected component of the punctured ball $B_{r/2}'(w)-w$. 
\end{proof}

It is a direct consequence of \autoref{same_com} that cutting locally commutes. 
\end{proof}

\begin{lem}\label{cut_all1}
 Let $G$ be a connected graph. Let $G'$ be obtained from $G$ by $r$-locally cutting all 
$r$-local cutvertices of $G$. Then $G'$ is $r$-locally 2-connected. 
\end{lem}

\begin{proof}
First we remark that the graph $G'$ is well-defined by \autoref{commute}. Let $v_1,v_2,...,v_n$ be 
an enumeration of the vertices of $G$. 
Here we stress that we include vertices in this enumeration that are not $r$-local cutvertices; and 
cutting them does not change the graph at all. 
We may assume by   \autoref{commute} 
that we obtain $G'$ 
from $G$ by first cutting $v_1$, then $v_2$, etc., so that in the final step we cut the 
vertex $v_n$. By \autoref{no_cut_vertex}, all slices of the vertex $v_n$ are not $r$-local 
cutvertices. As cutting locally commutes by  \autoref{commute}, we can argue the same for any other 
ordering of the vertices of $G$. Hence no vertex of the graph $G'$ is an $r$-local 
cutvertex. 
\end{proof}

\begin{proof}[Proof of \autoref{block-cut}.]
Let $r\in \Nbb\cup \{\infty\}$ be a parameter. 
 Let $G$ be a connected graph. We construct the graph $H$ from $G$ by $r$-locally cutting all 
$r$-local cutvertices of $G$.  By \autoref{commute} this is well-defined, 
and the graph $H$ is $r$-locally 2-connected by \autoref{cut_all1}.

Let $S$ be the set of $r$-local cutvertices of $G$. Let $B$ be the set of connected components of 
the graph $H$. 
We define a bipartite graph with bipartition $(B,S)$, where we add one edge between an $r$-local 
cutvertex $s$ of $G$ to a connected component $k$ of the graph $H$ for every slice of $s$ that 
is contained in $k$. The map associated to that edge map the singleton subgraph $s$ to its 
corresponding slice. We set $G_s=s$ and $G_b=b$ for $s\in S$ or $b\in B$, respectively. 

This defines a graph-decomposition of adhesion one and locality $r$ all of whose bags are 
$r$-locally 2-connected; compare \autoref{sec:graph-deco} for definitions. 
It is straightforward to check that the underlying graph of that graph-decomposition is the graph 
$G$. 
\end{proof}

\section{The existential statement of the local 2-separator theorem}\label{sec:ex}

In this section, we prove the lemmas necessary to deduce the existential statement of the local 
2-separator theorem; that is, the 
first sentence of \autoref{main_2sepr-intro}. 

 \addcontentsline{toc}{subsection}{Dfn: Local cutting}
\begin{dfn}[Local cutting]
 Given a graph $G$ with an $r$-local 2-separator $\{v_0,v_1\}$, the graph obtained from $G$ by 
\emph{$r$-locally cutting $\{v_0,v_1\}$} is defined as follows. 
Let $X$ be the set of connected components of the punctured explorer-neighbourhood 
$\expl(v_0,v_1)-v_0-v_1$. 
We now replace in the graph $G$ the vertices $v_0$ and $v_1$ each by one copy for every element of 
$X$. Here a copy of $v_i$ labelled by some $x\in X$ inherits an edge from $v_i$ if the other 
endvertex of that edge is a vertex of the component $x$.
We refer to the newly added vertices as the \emph{slices} of the vertices $v_1$ or $v_2$, 
respectively. 
We additionally add a weighted edge between any two 
slices for the same $x\in X$. Its weight 
is given by the minimum length of a path between $v_0$ and $v_1$ in the 
explorer-neighbourhood $\expl(v_0,v_1)$ with the component $x$ removed. It 
follows that all but one of these weights are 
always the same. We refer to these additional edges as \emph{torso edges}.
If the vertices $v_0$ and $v_1$ share an edge $e$ in $G$, 
we add a new connected component consisting of the edge $e$ and one edge in parallel to $e$. This 
other edge is a torso edge and its length is the minimum length of a path between $v_0$ and $v_1$ 
in 
the 
explorer-neighbourhood $\expl(v_0,v_1)$ minus $e$. Finally, we replace each torso edge by a path of 
the same 
length; we refer to such paths as \emph{torso-paths}\footnote{This technical step reduces 
technicalities elsewhere; indeed, the explorer-neighbourhood is not defined for weighted graphs, 
and doing so would lead to technicalities.}.
This completes the definition of local cutting, see \autoref{fig:gluing_new} for an example.
\end{dfn}

   \begin{figure} [htpb]   
\begin{center}
   	  \includegraphics[height=4cm]{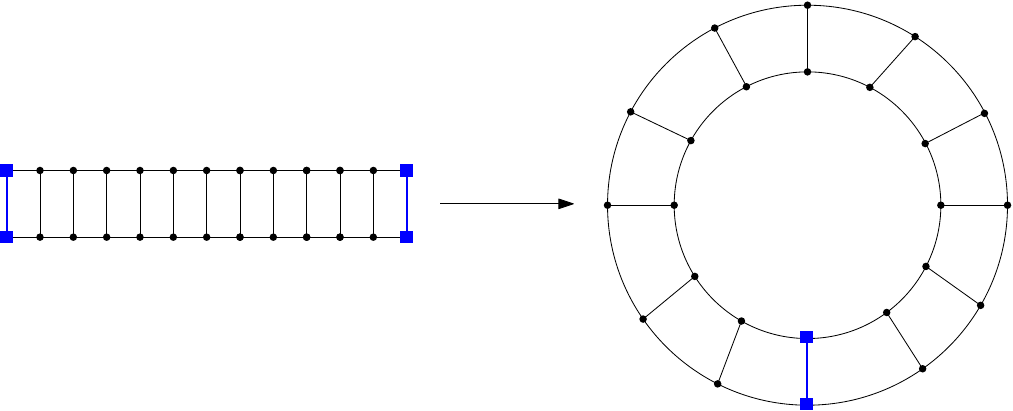}
   	  \caption{The graph on the left is obtained from the graph on the right by locally 
splitting at the local 2-separator given by the blue edge. This blue edge gets two 
copies on the left, one for each local component. In \autoref{sec:graph-deco} we shall 
investigate the inverse operation of local cutting.}\label{fig:gluing_new}
\end{center}
   \end{figure}

\begin{rem}
All edges incident with the vertices $v_0$ or $v_1$ are inherited by a unique 
slice except for possibly an edge between $v_0$ and $v_1$, which is in this artificial component 
of size two. If an edge between $v_0$ and $v_1$ in $G$ is a shortest path between these vertices, 
its length is the length of all torso edges. 
\end{rem}

\begin{lem}\label{cut_far}
 Slices of the same vertex have distance at least $r+1$.
 
 And slices for the same component have distance at most $r/2$ in a graph obtained from an 
$r$-locally $2$-connected graph by local cutting. 
\end{lem}
\begin{proof}
Firstly, suppose for a contradiction that there was a path $P$ of length at most $r$ joining two 
slices of a 
vertex $v$. Pick $P$ so that no interior vertex is a slice of $v$. Then all vertices on the path 
$P$ have distance at most $r/2$ from $v$ in the original graph. So the path $P$ projects to a 
closed walk completely contained in the ball $B_{\frac{r}{2}}(v)$. Hence all interior vertices of 
$P$ are in the same local component in any explorer-neighbourhood around $v$ and some other vertex 
$w$. 
So we get a contradiction to the assumption that $P$ joins two different slices of the vertex $v$. 

Secondly, let $\{a_1,a_2\}$ be an $r$-local $2$-separator of a graph $G$, and let $a_1'$ and $a_2'$
be slices for a local component $k$ in the graph $G'$ obtained from $G$ by local cutting.
As $G$ is $r$-locally 2-connected, by 
\myref{local_is_very_local} there is a cycle of length at most $r$ of $G$ through the vertices 
$a_1$ 
and $a_2$ that contains a vertex of the local component $k$. This cycle is a cycle of $G$ by 
\autoref{new_rem}. In the graph $G'$ there is a cycle $o'$ obtained from $o$ by possibly replacing 
one of its subpaths from $a_1$ to $a_2$ by a torso path. The cycle $o'$ has the same length as $o$ 
and contains the vertices $a_1'$ and $a_2'$, as it contains their neighbours from $k$. 
So $a_1'$ and $a_2'$ have distance at most $r/2$. 
\end{proof}

\begin{lem}\label{inverse_sum_cut}
 Let $G'$ be obtained from a graph $G$ by $r$-locally cutting a local 2-separator $\{v,w\}$.
 Then $G$ can be obtained from $G'$ by $r$-local sums.
\end{lem}

\begin{proof}
 The family of graphs for the local sum consists of copies of the graph $G'$, one copy for each 
component of the punctured explorer-neighbourhood $\expl(v,w)-v-w$, together with the artificial 
component of size two if $vw$ is an edge of $G$. We move from $G'$ to a weighted graph by 
replacing the torso paths by the torso edges of the cutting. 
Now we take an $r$-local $2$-sum where the gluing edges are the torso 
edges. 

It follows directly from the definitions of local cutting and local sums that the graph $G$ is 
equal to the graph obtained from $G'$ by applying the local 2-sum as described above. This 
local sum is $r$-local by \autoref{cut_far}. 
\end{proof}

\begin{lem}\label{loc2con_pres}
 Let $G'$ be a graph obtained from an $r$-locally 2-connected graph $G$ by $r$-locally cutting a 
local 2-separator. Then the graph $G'$ is $r$-locally 2-connected.
\end{lem}

\begin{proof}
By \myref{local_is_very_local}, every connected component of the graph $G'$ contains a cycle of 
length at 
most $r$. So it remains to 
show that there are no $r$-local cutvertices. 
Let $v$ be an arbitrary vertex of the graph $G'$.
We distinguish two cases.

{\bf Case 1:} the vertex $v$ is a slice.
We denote the local $2$-separator of $G$ at which we cut by $\{a,b\}$. We may assume, and we do 
assume, that the vertex $v$ is a slice of the vertex $a$. 
Let $X$ denote the set of components of the punctured explorer-neighbourhood $\expl(a,b)-a-b$. 
Recall that the ball $B_{r/2}(a)$ of radius $r/2$ around $a$ in the graph $G$ is a naturally 
embedded subgraph of the 
explorer-neighbourhood $\expl(a,b)$ (and we shall suppress this natural embedding).
For each component $x\in X$, we let $H_x$ be the intersection of the punctured ball $B_{r/2}(a)-a$ 
with the component $x$. 
Note that $b$ has distance at most $r/2$ 
from $a$ in $G$ by \myref{local_is_very_local}. 
The punctured ball $B_{r/2}(a)-a$ is obtained by taking 
the union of the graphs $H_x$ and adding the vertex $b$ together with its incident edges.
As this punctured ball is connected by assumption, all graphs $H_x$ must have the vertex $b$ in 
their neighbourhood and all graphs $H_x+b$ must be connected. 
The punctured ball $B_{r/2}(v)-v$ around $v$ in $G'$ is $H_y+b$, where $y$ is the component of 
$\expl(a,b)-a-b$ that belongs to the slice $v$. So $B_{r/2}(v)-v$ is connected. 
Thus the vertex $v$ is not an $r$-local cutvertex. 
This completes Case 1.

{\bf Case 2:} the vertex $v$ is not a slice. 
Then the vertex $v$ is a vertex of the graph $G$.  

Suppose for a contradiction that the punctured ball $B_{r/2}(v)-v$ around $v$ of radius $r/2$ in 
the 
graph $G'$ is 
disconnected.
Let $w_1'$ and $w_2'$ be two arbitrary neighbours of $v$ in $G'$ in different components of that 
punctured ball. 
Let $w_1$ and $w_2$ be the vertices of the graph $G$ from which the vertices $w_1'$ and $w_2'$ are 
slices of or that are equal to them, respectively. Then the vertices $w_1$ and $w_2$ are adjacent 
to the vertex $v$ in the 
graph $G$ by the definition of local cutting. As the punctured ball $B_{r/2}(v)-v$ of radius $r/2$ 
around the 
vertex $v$ in the 
graph $G$ is connected by assumption, there is a path $P$ within that punctured ball from $w_1$ to 
$w_2$. 
This path together with the vertex $v$ is a cycle $o$ within that ball. So by \autoref{gen} this 
cycle is generated by cycles within that ball of length at most $r$. 

Let $W'$ be the set of neighbours of the vertex $v$ in the graph $G'$ in the component of the 
punctured ball containing the vertex $w_1'$. Let $W$ be the set of vertices of the graph $G$ that 
are equal to vertices in $W'$ or that have slices in the set $W'$. 
By $E(W)$ we denote the set of edges in the graph $G$ from $v$ to a vertex in $W$.

By construction the cycle $o$ 
contains precisely one edge from the set $E(W)$. Hence there must be a cycle $\hat o$ of $G$ from 
the 
generating set that contains an odd number of edges from $E(W)$. 
As $\hat o$ has maximum degree two, it contains precisely one edge from the set $E(W)$.

We denote the local $2$-separator of $G$ at which we locally cut by $\{a,b\}$.

{\bf Case 2A:} the cycle $\hat o$ does not contain any of the vertices $a$ or $b$.
Then $\hat o-v$ is a path in the graph $G'$ from a vertex of $W'$ to a neighbour of the vertex 
$v$ outside $W'$. This is a contradiction to the assumption that the punctured ball is 
disconnected. This 
completes this case.

{\bf Case 2B:} the cycle $\hat o$ contains one of the vertices $a$ or $b$.
If $\hat o$ was a cycle of the graph $G'$, then we would get the desired contradiction as the path 
$\hat o-v$ would join two vertices in different components of the punctured ball around $v$ in $G'$. 
So assume that this is not the case. Then $\hat o$ contains both vertices $a$ and $b$. Let $Q$ be 
the $a$-$b$-subpath of $\hat o$ containing $v$. 
Then $Q$ plus a torso edge between two slices of $a$ and $b$ is a cycle of $G'$ whose length is no 
longer than the length of $\hat o$. Denote this cycle by $o'$. Then $o'-v$ joins two vertices in 
different components of the punctured ball around $v$ in $G'$, a contradiction. 
This completes Case 2, and hence the whole proof. 
\end{proof}

\begin{rem}
 In \autoref{ex2} we give an alternative proof of the first sentence of \autoref{main_2sepr-intro} 
that 
only relies on lemmas of the paper proved up to this point. We encourage the reader to look at this 
proof next.  
\end{rem}

\section{Properties of local 2-separators}\label{props}

In this section we prove some lemmas that are used in our proof of \autoref{structure_torso} and 
\autoref{thm:main_intro}. 

A \emph{cut} is the set of edges between a bipartition of the vertex set. The bipartition classes 
are referred to as the \emph{sides} of the cut. 

\begin{lem}\label{traverse_standard2}
 Let $Y$ be a cut in a graph $G$. 
 Then the endvertices of a path $P$ are on the same side of $Y$ if and only if $P$ intersects $Y$ 
evenly.
\end{lem}

\begin{proof}[Proof:]
 by induction on the length of the path $P$. 
\end{proof}

 \addcontentsline{toc}{subsection}{Dfn: Crossing}
\begin{dfn}[Crossing]
Given an $r$-local 
2-separator $\{v,w\}$ and a pair of vertices $\{a,b\}$ of the 
explorer-neighbourhood $\expl(v,w)$, we say that $\{a,b\}$ \emph{pre-crosses} $\{v,w\}$ if 
$a$ and $b$ are in different components of the punctured 
explorer-neighbourhood $\expl(v,w)-v-w$.
And $\{a,b\}$ \emph{crosses} the $r$-local 
2-separator $\{v,w\}$ if it pre-crosses it and there is a cycle of length at most $r$ in 
$\expl(v,w)$ through 
$a$ 
and $b$ in the explorer-neighbourhood; note that this cycle contains $v$ and $w$\footnote{We 
remark that this definition is a little subtle, as there may well be vertices $a$ and $b$ of $G$ 
that lie on a common cycle of $G$ of length at most $r$ and that are in different components of }. 

We say that a pair $\{a,b\}$ of (distinct) vertices of $G$ \emph{crosses} a local 2-separator 
$\{v,w\}$ of $G$ if there 
exist copies $a'$ and $b'$ of $a$ and $b$ in the explorer-neighbourhood $\expl(v,w)$, respectively, 
so that $\{a',b'\}$ crosses $\{v,w\}$. 
\end{dfn}

\begin{rem}
If $\{a,b\}$ is a local separator in an $r$-locally $2$-connected graph, then the existence of a 
cycle $o$ of length at most $r$ 
through $a$ and $b$ is guaranteed by  \myref{local_is_very_local}. Hence 
`crossing' essentially means `pre-crossing' plus the crossing vertices are `near' to the local 
separator. 
Phrasing being `near' in terms of this cycle seems particularly natural in view of 
\myref{alt_exist} and  \autoref{cross_sym} below. 
\end{rem}

 \addcontentsline{toc}{subsection}{Dfn: Alternating cycle}
\begin{dfn}[Alternating cycle]
 Given two disjoint sets $A_1$ and $A_2$, we say that a cyclic ordering \emph{alternates} between 
$A_1$ and $A_2$ if it has even length and each element of the cyclic ordering in $A_i$ has its two 
neighbours in $A_{i+1}$ (for $i\in \Fbb_2$). 

A \emph{pre-alternating cycle} is a cycle $o$ together with two local 
2-separators $\{a_1,a_2\}$ and $\{b_1,b_2\}$ such that the order in which these four vertices 
appear on the cycle $o$ alternates between the two local separators (i.e., it is $a_1b_1a_2b_2$ or 
its reverse $a_1b_2a_2b_1$). An \emph{alternating cycle} is a pre-alternating cycle $o$ such that 
the two neighbours of $a_i$ on $o$ are in different components of $\expl(a_1,a_2)-a_1-a_2$ for 
$i=1,2$, and analoguously the two neighbours of $b_i$ on $o$ are in different components of 
$\expl(b_1,b_2)-b_1-b_2$ for 
$i=1,2$. 
Below sometimes it will be more convenient to refer to this situation by saying that the 
cycle $o$ \emph{alternates between} the local 2-separators $\{a_1,a_2\}$ and $\{b_1,b_2\}$, see 
\autoref{fig:alt_cycle}.
\end{dfn}

   \begin{figure} [htpb]   
\begin{center}
   	  \includegraphics[height=4cm]{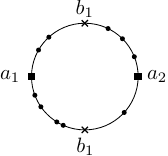}
   	  \caption{An alternating cycle. The vertices $a_1$ and $a_2$ of the first local 
2-separator are indicated by boxes, the vertices $b_1$ and $b_2$ of the second 
local 2-separator are indicated by crosses. The cyclic order of the cycle 
induced on these four vertices alternates between the two local separators.}\label{fig:alt_cycle}
\end{center}
   \end{figure}

   The `nearness' condition in the definition of `crossing' is equivalent to the following stronger 
property.

\begin{lem}[Alternating Cycle Lemma]\label{alt_exist}
Let $\{a_1,a_2\}$ and $\{b_1,b_2\}$ be $r$-local 2-separators in an $r$-locally $2$-connected 
graph $G$. The following are equivalent:
\begin{enumerate}
 \item $\{a_1,a_2\}$ crosses $\{b_1,b_2\}$;
 \item there is a cycle of $G$ of length at most $r$ alternating between $\{a_1,a_2\}$ and 
$\{b_1,b_2\}$;
 \item every cycle of length at most $r$ of $G$ containing $a_1$ and $a_2$ alternates between 
$\{a_1,a_2\}$ and $\{b_1,b_2\}$;
\item every cycle of length at most $r$ of $G$ containing $a_1$ and $a_2$ pre-alternates between 
$\{a_1,a_2\}$ and $\{b_1,b_2\}$ and the neighbours of $b_1$ on $o$ are in different 
components of $\expl(b_1,b_2)-b_1-b_2$. 
\end{enumerate}
\end{lem}

\begin{proof}
 To see that (3) implies (2) note that by $r$-locally $2$-connectivity there is a cycle of length at 
most $r$ containing $a_1$ and $a_2$ by \myref{local_is_very_local}; and it is a cycle of $G$ by 
\autoref{new_rem}.

To see that (2) implies (1), let $o$ be a cycle alternating between $\{a_1,a_2\}$ and $\{b_1,b_2\}$.
By \autoref{unique_copy_extended}, $o$ is a cycle of $\expl(b_1,b_2)$ and all its vertices have 
unique copies in $\expl(b_1,b_2)$. For simplicity, we suppress a map between the vertices $a_i$ and 
their unique copies in  $\expl(b_1,b_2)$. As $o$ is alternating, the vertices $a_1$ and $a_2$ are 
in different components of the punctured explorer-neighbourhood $\expl(b_1,b_2)-b_1-b_2$.

Next we show that (1) implies (4). 
For this assume that the vertices $a_1$ and $a_2$ have copies $a_1'$ and $a_2'$, respectively, in 
different components of the punctured explorer-neighbourhood $\expl(b_1,b_2)-b_1-b_2$ such that 
there is a cycle $o$ of $\expl(b_1,b_2)$ of length at most $r$ containing $a_1'$ and $a_2'$. 
So the cycle $o$ has to intersect the 2-separator $\{b_1,b_2\}$ of $\expl(b_1,b_2)$ in both its 
vertices. Thus by \autoref{unique_copy_extended}, all vertices on $o$ have unique copies in 
$\expl(b_1,b_2)$. For simplicity, we suppress a map between the vertex $a_i$ and its unique copy 
$a_i'$ in $\expl(b_1,b_2)$. We have shown that in $G$ there is an $a_1$-$a_2$-path $P$ of length at 
most $r/2$ containing one of the vertices $b_i$, say $b_1$. 

Now let $u$ be an arbitrary cycle of length at most $r$ containing $a_1$ and $a_2$. Let $Q$ be a 
subpath of $G$ of that cycle from $a_1$ to $a_2$ of length at most $r/2$. The closed walk $PQ$ has 
length at most $r$. As it contains the vertex $b_1$, by \autoref{new_rem} it is a closed walk in 
$\expl(b_1,b_2)$. Hence the path $Q$ traverses the separator $\{b_1,b_2\}$. So it contains a vertex 
$b_i$. So by \autoref{new_rem}, the cycle $u$ is a cycle of $\expl(b_1,b_2)$.
It follows that the cycle $u$ pre-alternates between the local 2-separators $\{a_1,a_2\}$ and 
$\{b_1,b_2\}$. As $a_1$ and $a_2$ are in different components of $\expl(b_1,b_2)-b_1-b_2$, this 
completes the proof that (1) implies (4).

Finally, we show how (4) implies (3). 
Let $u$ be a cycle length at most $r$ of $G$ containing $a_1$ and $a_2$. By (4) it pre-alternates 
between 
$\{a_1,a_2\}$ and $\{b_1,b_2\}$ and the neighbours of $b_1$ on $o$ are in different 
components of $\expl(b_1,b_2)-b_1-b_2$. 
By \autoref{unique_copy_extended}, the vertices $b_1$ and $b_2$ have unique copies in 
$\expl(a_1,a_2)$, and we suppress a map between the vertex $b_i$ and its unique copy 
 in $\expl(a_1,a_2)$. It suffices to show that the vertices $b_1$ and $b_2$ are in different 
components of $\expl(a_1,a_2)-a_1-a_2$. 

Suppose for a contradiction that they are in the same component. By \myref{local_is_very_local}, 
there is a cycle $c$ of length at 
most $r$ in $\expl(a_1,a_2)$ containing $a_1$ and $a_2$ such that one of its $a_1$-$a_2$-subpaths 
does not contain a vertex $b_i$. By \autoref{new_rem}, $c$ is a cycle of $G$. 
Let $R$ be an 
$a_1$-$a_2$-subpath of $c$ of length at most $r/2$. Then $PR$ is a closed walk of $G$ of length at 
most $r$. As it contains the vertex $b_1$, it is a cycle of $\expl(b_1,b_2)$. So $R$ contains a 
vertex $b_i$. So by \autoref{new_rem}, $c$ is a cycle of $\expl(b_1,b_2)$.
As the vertices $a_1$ and $a_2$ are in different components of 
$\expl(b_1,b_2)-b_1-b_2$, every subpath of $c$ has to contain a vertex $b_i$, a contradiction to 
the choice of $c$. 
Hence the vertices $b_1$ and $b_2$ are in different components of 
$\expl(a_1,a_2)-a_1-a_2$. So every cycle $u$ alternates between $\{a_1,a_2\}$ and $\{b_1,b_2\}$. 
\end{proof}

The next lemma essentially says that crossing is a symmetric relation on $r$-local 
2-separators. 

\begin{cor}\label{cross_sym}
Let $G$ be an $r$-locally 2-connected graph with two $r$-local 2-separators $\{a_1,a_2\}$ and
$\{b_1,b_2\}$.
If $\{a_1,a_2\}$ crosses $\{b_1,b_2\}$, then $\{b_1,b_2\}$ crosses $\{a_1,a_2\}$.

Furthermore the punctured explorer-neighbourhood $\expl(a_1,a_2)-a_1-a_2$ has precisely two 
components. 
\end{cor}

\begin{proof}
For the first part assume that $\{a_1,a_2\}$ crosses $\{b_1,b_2\}$. 
This is equivalent to the symmetric condition (2) in \myref{alt_exist}, so it follows that 
$\{b_1,b_2\}$ crosses $\{a_1,a_2\}$.

To see the `Furthermore'-part, by condition (3) of  \myref{alt_exist}
every cycle 
$c$ of length at 
most $r$ in $\expl(a_1,a_2)$ containing $a_1$ and $a_2$ has to contain a vertex $b_i$ on each of 
its $a_1$-$a_2$-subpaths. By the contrapositive of \myref{local_is_very_local}, we conclude that 
every component of $\expl(a_1,a_2)-a_1-a_2$ contains a vertex $b_i$. 
So $\expl(a_1,a_2)-a_1-a_2$ has at most two components, and it has exactly two as $\{a_1,a_2\}$ is 
a local separator. 
\end{proof}

\begin{rem}\label{corner_{r/2}em}
 A key-feature of separators in graphs is the `Corner Property\footnote{In modern terms, it 
just says that the connectivity function is submodular.}\rq.
In the classic version for 2-separators, the Corner-Lemma says that if two 2-separators 
$\{a_1,a_2\}$ and $\{b_1,b_2\}$ cross, then $\{a_1,b_1\}$ is a 2-separator -- under certain 
non-triviality conditions. We shall prove that this property also holds for our local 2-separators.
This is in fact a central lemma of the paper 
and the notion of `explorer-neighbourhood' is key to this lemma.

Intuitively speaking, the reason why this lemma is true is the following. 
As the Corner-Lemma is 
true for separators in the classical version, the only reason why a local version could 
break is 
essentially if one of the involved vertices, say $a_1$,  
would explore a new path around the local separator $\{b_1,b_2\}$ to the other component of the 
punctured explorer-neighbourhood $\expl(b_1,b_2)-b_1-b_2$ that was not known to $b_1$ or $b_2$. 
If we used `double balls' instead of our local notion of `explorer-neighbourhoods', this could well 
happen, see \autoref{fig:no-corner} below for an example. The intuition now is that $a_1$ may well 
`explore' a  new path to the other local component but when the explorers compare their 
maps, they have given the things different names and so the explorers do not realise that between 
them they know a path around. Hence they believe that the corner $\{a_1,b_1\}$ is separating. That 
is how we think about locally separating: the explorers cannot prove that there is a way round 
with their local information. 

This is somewhat similar to the following situation. Imagine you are running on a graph and at 
any point in time you can only see your neighbours. If the graph is a cycle, you cannot tell its 
length -- and you even cannot distinguish it from the 2-way-infinite path $\Zbb$. 
\end{rem}

Now we start setting up some notation for \autoref{corner-lemma} below. 

 \addcontentsline{toc}{subsection}{Dfn: Corner Setting}
\begin{dfn}[Corner-Setting]
 Given an $r$-local separator $\{a_1,a_2\}$ crossing an $r$-local 2-separator $\{b_1,b_2\}$,
by \autoref{cross_sym} and \myref{alt_exist}, $\{b_1,b_2\}$ crosses $\{a_1,a_2\}$ and there is a 
cycle $o$ of 
length at most $r$ alternating between these two local separators.
A \emph{person of type one} is a vertex $x$ of $V(o)-a_1-a_2-b_1-b_2$ that has a copy in the same 
component of $\expl(a_1,a_2)-a_1-a_2$ as $b_1$ and a copy in the same 
component of $\expl(b_1,b_2)-b_1-b_2$ as $a_1$.
A \emph{person of type two} is a neighbour $x$ of $b_1$ outside the cycle 
$o$ that has a copy in the same component of $\expl(b_1,b_2)-b_1-b_2$ as $a_1$.
A \emph{person of type three} is a neighbour $x$ of $a_1$ outside the cycle 
$o$ that has a copy in the same component of $\expl(a_1,a_2)-a_1-a_2$ as $b_1$.
A \emph{person} is a person of type one, two or three. 
We say that a person \emph{lives in the corner $\{a_1,b_1\}$} if there exists a person.
\end{dfn}

\begin{rem}
 A person of type one has unique copies in $\expl(a_1,a_2)$, 
$\expl(b_1,b_2)$ and $\expl(a_1,b_1)$ by \autoref{unique_copy_extended}.
For a person $x$ of type two, the edge $xb_1$ has unique copies in
$\expl(b_1,b_2)$ and 
$\expl(a_1,b_1)$ by \autoref{unique_copy}. Hence we define the copy of a person of type two in 
these neighbourhoods to be the unique endvertex of that edge that is a copy of $x$. 
Hence  to simplify notation, below we suppress a bijection between a person and its unique copies 
in the explorer-neighbourhoods where  copies are unique. 
\end{rem}

\begin{lem}[Corner Lemma]\label{corner-lemma}
Assume $G$ is $r$-locally 2-connected and assume the corner-setting. 
Assume a person $x$ lives in the corner $\{a_1,b_1\}$, then $\{a_1,b_1\}$ 
is an $r$-local 2-separator. 

Moreover, $x$ is in a different component of $\expl(a_1,b_1)-a_1-b_1$ than (copies of) $a_2$ and 
$b_2$.
\end{lem}

\begin{proof}
 By \autoref{unique_copy_extended}, the vertices $a_1$, $a_2$, $b_1$ and $b_2$ have unique copies 
in $\expl(a_1,b_1)$; hence for this proof we suppress a bijection between these vertices and their 
copies in $\expl(a_1,b_1)$.
We start by showing the following. 

\begin{sublem}\label{sepr_inexpl1}
 The vertices $b_1$ and $b_2$ are in different components of the graph $\expl(a_1,b_1)-a_1-a_2$.
\end{sublem}

\begin{proof}
Suppose not for a contradiction. Then there is a path $P$ of the graph 
$\expl(a_1,b_1)-a_1-a_2$ from $b_1$ to $b_2$. 

Let $W$ be the neighbourhood of the set $\{a_1,a_2\}$ in the 
punctured explorer-neighbourhood $\expl(a_1,a_2)-a_1-a_2$ in the component containing $b_1$. 
By \autoref{unique_copy} neighbours of $a_1$ or $a_2$ have unique copies in the 
explorer-neighbourhood $\expl(a_1,a_2)$; hence there is a bijection between the neighbours of $a_1$ 
and $a_2$ in $G$ and the explorer-neighbourhood. To simplify notation we suppress this map in 
our notation. And we will simply consider $W$ as a vertex set of the graph $G$, as well. 
Let $E(W)$ be the set of edges of the graph $G$ from $\{a_1,a_2\}$ to $W$.

Recall that by the corner-setting, there is a cycle of length at most $r$ alternating between the 
local 
separators $\{a_1,a_2\}$ and $\{b_1,b_2\}$. Thus it has a subpath $Q$ from $b_1$ to $b_2$ of length 
at 
most $r/2$; this subpath contains precisely one of the vertices $a_1$ and $a_2$. 
This alternating cycle is also a cycle of the explorer-neighbourhood $\expl(a_1,a_2)$ and the path 
$Q$ has to intersect the set $E(W)$ oddly\footnote{In fact it intersects this set just once but we 
will not need that strengthening.} as it connects vertices in different components. 
Vertices of $o$ have unique copies in $\expl(a_1,b_1)$ by \autoref{unique_copy_extended}. We 
suppress a bijection between vertices of $o$ and their unique copies in $\expl(a_1,b_1)$.
So in $\expl(a_1,b_1)$, $Q$ is a path from $b_1$ to $b_2$. 

Let $u$ be the closed walk of the explorer-neighbourhood $\expl(a_1,b_1)$
obtained by concatenating $P$ and $Q$. 
The path $P$ considered as a walk of the graph $G$ contains no vertex $a_i$ and thus does 
not intersect the edge set $E(W)$. Thus the closed walk $u$, considered as an edge set of the 
graph 
$G$ intersects the edge set $E(W)$ in 
an odd number of edges. 

By \autoref{cycle_gen} the closed walk $u$ is generated by cycles of $G$ included in the balls 
$B_{r/2}(a_1)$ 
and $B_{r/2}(b_1)$. These cycles are in turn by \autoref{gen} generated by cycles of length at most 
$r$ included in these 
balls. 
Hence one of the generating cycles has to intersect the edge set $E(W)$ oddly. Call such a cycle 
$u'$. So the cycle $u'$ contains the vertex $a_1$ or $a_2$. As it has bounded length, it is a cycle 
of the explorer-neighbourhood $\expl(a_1,a_2)$. This is a contradiction as in the 
explorer-neighbourhood $\expl(a_1,a_2)$ the cycle $u'$ and the cut $E(W)$ cannot intersect oddly. 
Thus the vertices $b_1$ and $b_2$ must be in different components of the graph 
$\expl(a_1,b_1)-a_1-a_2$.
\end{proof}

By exchanging the roles of the `$a_i$' and `$b_i$' in \autoref{sepr_inexpl1} one obtains the 
following.

\begin{sublem}\label{sepr_inexpl2}
  The vertices $a_1$ and $a_2$ are in different components of the graph $\expl(a_1,b_1)-b_1-b_2$.
\end{sublem}

\begin{proof}
The proof is analogous to that of \autoref{sepr_inexpl1}. 
\end{proof}

By $C(a,i)$ we denote the component of $\expl(a_1,b_1)-a_1-a_2$ containing the vertex $b_i$.
By $C(b,i)$ we denote the component of $\expl(a_1,b_1)-b_1-b_2$ containing the vertex $a_i$.

By assumption there is vertex $x$ of $G$ that is a person living in the corner $\{a_1,b_1\}$.
If $x$ is a person of type one, then it is contained in both components $C(a,1)$ and $C(b,1)$, as 
the vertices $b_1$ and $a_1$ are in the respective components.
If $x$ is a person of type two, then it is contained in the component $C(a,1)$ as $b_1\in C(a,1)$.
Next we verify that it is in $C(b,1)$. By definition $a_1\in C(b,1)$. As $x$ is in the sam 
component of $\expl(b_1,b_2)-b_1-b_2$ as $a_1$, pick a path joining them in there and add the edges 
$b_1x$ and the subpath $b_1oa_1$ of $o$ that avoids $b_2$. This forms a cycle. 
By \autoref{cycle_gen} and \autoref{gen}, this cycle is generated by cycles of length at most 
$r$. From these cycles take those that contain the vertex $b_1$. The sum of these cycles with the 
vertex $b_1$ removed gives a closed walk in $B_{r/2}(b_1)$ from $x$ to the neighbour of $b_1$ on 
$b_1oa_1$ (which is equal to $a_1$ or a person of type one). This walk witnesses that $x\in 
C(b,1)$. 
Like for persons of type two, we show for persons of type three that they are contained in 
$C(a,1)$ and $C(b,1)$. 
To summarise, in every case the intersection of 
 $C(a,1)$ and $C(b,1)$ contains the person $x$.
 
In particular, the intersection of $C(a,1)$ and $C(b,1)$ is nonempty and includes a component of 
$\expl(a_1,b_1)-a_1-a_2-b_1-b_2$. Denote such a component containing the person $x$ by 
$k$. As the component $k$ is included in $C(a,1)$, it does not contain any neighbour of the vertex 
$b_2$ by \autoref{sepr_inexpl1}. Similarly, $k$ does not contain any neighbour of the vertex $a_2$ 
by \autoref{sepr_inexpl2}. Hence $k$ is also a component of $\expl(a_1,b_1)-a_1-b_1$. As $k$ does 
not contain the vertex $a_2$, the punctured explorer-neighbourhood $\expl(a_1,b_1)-a_1-b_1$ is 
disconnected. 
Thus $\{a_1,b_1\}$ is an $r$-local 2-separator. 

The `Moreover'-part is clear by construction. 
\end{proof}

\begin{rem}
 In the next paragraph we define \lq contacts\rq. This definition is technical and results in quite 
a few technicalities later on. In contrast to this, the intuition behind contacts is rather 
simple:
given a graph $G'$ obtained from a graph $G$ by locally cutting at $\{a_1,a_2\}$. 
We would like to move back and forth between local separators of $G$ and $G'$.
Specifically, if $\{b_1,b_2\}$ is disjoint from $\{a_1,a_2\}$, then $\{b_1,b_2\}$ is a local 
separator of $G$ if and only if it is a local separator of $G'$; compare \myref{recur_gd0_ONESTEP} 
and \myref{lifting-lem} below. If the sets $\{b_1,b_2\}$ and $\{a_1,a_2\}$ are not disjoint, 
we still want to move back and forth between local separators, and contacts are our way to 
formalise it; they associate to every vertex $b_i$ of $G$ a canonical vertex of $G'$ relative to 
$\{b_1,b_2\}$. (Imagine you phone a \lq helpline\rq\ and they redirect your call to the \lq  next 
contact\rq. Don't worry: unlike for many helplines, here this happens only once.) The proofs below 
would simplify a lot if one (carelessly) assumed that $\{b_1,b_2\}$ and $\{a_1,a_2\}$ were disjoint.
\end{rem}

 \addcontentsline{toc}{subsection}{Dfn: Contacts}
\begin{dfn}[Contacts]
 Given a graph $G'$ obtained from $G$ by locally cutting a local 2-separator, by definition there 
is a bijection
between the edges of $G$ and the edges of $G'$ that are not on torso paths. To simplify 
notation, we 
 suppress this bijection from our notation. 
Let $b$ be a vertex of $G$ and let $b'$ be a vertex of $G'$ that is equal to $b$ or a slice 
thereof.  Let $e$ be an edge of $G$ that is incident with the vertex $b$. Then the edge $e$ is 
incident 
with the vertex $b'$ in $G'$ or else the vertex $b'$ must be a slice, and thus is incident with a 
unique edge on a torso path. 
The \emph{contact} of the edge $e$ at the vertex $b'$ is the edge $e$ itself if $e$ is incident 
with $b'$ in $G'$ or else the contact is the unique edge on a torso path incident with $b'$. 
\end{dfn}

Given a local 2-separator $\{b_1,b_2\}$ in a graph $G$ and two edges $e$ and $f$ each with 
one endvertex in $\{b_1,b_2\}$, we say that $e$ 
and $f$ are \emph{separated} by $\{b_1,b_2\}$ if 
the edges $e$ and $f$ have endvertices in different components of the punctured 
explorer-neighbourhood $\expl(b_1,b_2)-b_1-b_2$.

For a vertex $x'$ of $G'$ that is not an interior vertex of a torso-path, there is a unique vertex 
of $G$ that is equal to $x'$ or such that $x'$ 
is a slice of that vertex. We denote this vertex by $x$.

\begin{lem}[Projection Lemma]\label{recur_gd0_ONESTEP}
Assume $G$ is $r$-locally 2-connected. 
For any $r$-local $2$-separator 
$\{b_1',b_2'\}$ of $G'$ such that the $b_i'$ are not interior vertices of torso-paths, the set 
$\{b_1,b_2\}$ is an $r$-local $2$-separator of $G$. 

More specifically, edges $e$ and $f$ each with 
one endvertex in $\{b_1,b_2\}$ are separated by $\{b_1,b_2\}$ in $G$ if 
their contacts are separated by $\{b_1',b_2'\}$ in $G'$.
\end{lem}

\begin{proof}
In this proof we will distinguish between the vertices of $G$ and $G'$ by adding a dash to the 
vertices of the graph $G'$; for example we write $b_1'$ when we consider $b_1$ as a vertex of $G'$ 
and $b_1$ when we consider it as a vertex of the graph $G$ (whenever $b_1'$ is not an interior 
vertex of a torso path). We prove the \lq More specifically\rq-part, as this suffices.

Let $e$ and $f$ be edges incident with 
precisely one of $b_1$ or $b_2$ such that their contacts are separated by $\{b_1',b_2'\}$ in $G'$.
\begin{sublem}\label{at_most_one_torso}
 There is at most one edge on a torso path incident with vertices of  $\{b_1',b_2'\}$. 
\end{sublem}

\begin{proof}
Let $\{a_1,a_2\}$ be a local 2-separator of $G$ such that $G'$ is obtained from $G$ by locally 
cutting at $\{a_1,a_2\}$. 
Suppose for a contradiction that there are two edges on torso paths incident with vertices of 
$\{b_1',b_2'\}$. As each vertex is incident with at most one torso edge, both $b_1'$ and $b_2'$ 
must be slices. 

 As $\{b_1',b_2'\}$ is an $r$-local 2-separator, the vertices $b_1'$ and $b_2'$ have distance at 
most $r/2$. 
 So $b_1'$ and $b_2'$ cannot be slices of the same vertex by \autoref{cut_far}. 
 By symmetry assume that $b_1'$ is a slice of $a_1$ and $b_2'$ is a slice of $a_2$.
 As there are at least two torso edges, $b_1'$ and $b_2'$ must be slices for different components  
of $\expl(a_1,a_2)-a_1-a_2$. 
As $\{a_1,a_2\}$ is an $r$-local $2$-separator, the vertices $a_1$ and $a_2$ have distance at most 
$r/2$. It follows from the definition of local cutting that slices of $a_1$ and $a_2$ have the same 
distance as $a_1$ and $a_2$, so this distance is upper-bounded by $r/2$. 
 
Let $a_1'$ be the slice of the vertex $a_1$ for the same component as the slice $b_2'$ of $a_2$. 
So the distance from $a_1'$ to $b_2'$ is at most $r/2$. So the distance between the distinct slices 
$a_1'$ and $b_1'$ of $a_1$ is at most $r$. This is a contradiction to \autoref{cut_far}. 
Hence there is at most one torso edge incident with vertices of  $\{b_1',b_2'\}$. 
\end{proof}

Let $k'$ be the component of the punctured explorer-neighbourhood $\expl(b_1',b_2')-b_1'-b_2'$ in 
$G'$ that contains an endvertex of the contact for $e$\footnote{This is well-defined as the 
contact for $e$ is an edge that has exactly one endvertex in $\{b_1',b_2'\}$, and the other 
endvertex defines the component $k'$ uniquely.}.  
Let $W$ be the set of edges of $G'$ with one endvertex in $\{b_1',b_2'\}$ and the other endvertex 
in $k'$. 
By \autoref{at_most_one_torso} and by 
exchanging the roles of the edges $e$ and $f$ if necessary, we may assume, and we do assume, that 
no edge on a torso path incident with a vertex of $\{b_1',b_2'\}$ has its other endvertex in 
the component 
$k'$. Hence the edge set $W$ is also an edge set of the graph $G$. 
By \autoref{unique_copy}, edges of $W$ have unique copies in $\expl(b_1,b_2)$. For simplicity we 
suppress a bijection between $W$ and its uniquely defined copy in $\expl(b_1,b_2)$.

\begin{sublem}\label{path_exists1}
 There is a path $Q$ from $b_1$ to $b_2$ contained in $\expl(b_1,b_2)$ that contains an 
even number of edges from $W$. 
\end{sublem}

\begin{proof}
As $G'$ is $r$-locally 2-connected by \autoref{loc2con_pres}, by \myref{local_is_very_local} 
there is a cycle $o'$ of length at most $r$ included in 
$\expl(b_1',b_2')$ containing the vertices $b_1'$ and $b_2'$. By 
\autoref{cut_far}, the cycle $o'$ can contain edges of at most one torso path (note that if a 
cycle 
contains edges from a torso path, it must include the whole torso path). Hence there is a 
path $Q'$ 
from $b_1'$ to $b_2'$ included in $o'$ that does not contain any edges of torso paths. 
As $Q'$ has both its endvertices on the same side of the cut $W$, it intersects that cut 
evenly. 
The edges of $Q'$ form 
a path $Q$ in the graph $G$ from $b_1$ to $b_2$.
By definition of local splitting, there is a cycle $o$ of $G$ of the same length as $o'$ that 
includes the path $Q$. So by \autoref{unique_copy_extended}, $Q$ is a path in $\expl(b_1,b_2)$.  
\end{proof}

Suppose for a contradiction that the edges $e$ and $f$ are not separated by $\{b_1,b_2\}$ 
in $G$; that is, they are incident with vertices of the same component 
of the 
punctured explorer-neighbourhood $\expl(b_1,b_2)-b_1-b_2$.

\begin{sublem}\label{o_exists}
 There is a cycle $o$ of the explorer-neighbourhood $\expl(b_1,b_2)$ in $G$ that intersects the set 
$W$ oddly.
\end{sublem}
\begin{proof}
By assumption, there is a path $P$ included in the punctured 
explorer-neighbourhood $\expl(b_1,b_2)-b_1-b_2$ between the endvertices of the edges $e$ and 
$f$ outside $\{b_1,b_2\}$. 
Now we extend the path $P$ to a walk by adding the edges $e$ and 
$f$ at the endvertices of $P$. The endvertices of this extended walk are in the set $\{b_1,b_2\}$. 
This walk intersects the edge set $W$ precisely in the edge $e$. 
Either this walk is a cycle, or it is a path whose endvertices are $b_1$ and $b_2$. 
While we are done immediately in the first case, in the second case 
we concatenate this path $ePf$ with a path $Q$ as in  
\autoref{path_exists1}. This way we obtain a closed walk, which includes the desired cycle $o$. 
\end{proof}

\begin{sublem}\label{o1_exists}
There is a cycle $o_1$ of $G$ contained in the explorer-neighbourhood $\expl(b_1,b_2)$ of 
length bounded by $r$ that intersects the set 
$W$ oddly.
\end{sublem}
\begin{proof}
 Let $o$ be a cycle as in \autoref{o_exists}. By \autoref{cycle_gen}, the cycle $o$ is generated 
from cycles that are included within the balls of radius $r/2$ around the vertices $b_1$ and 
$b_2$. These cycles, in turn by \autoref{gen}, are generated by cycles within the respective balls 
of length bounded by $r$. To summarise: the cycle $o$ is generated over the finite field 
$\Fbb_2$ by cycles of $G$ contained in the explorer-neighbourhood $\expl(b_1,b_2)$ of 
length bounded by $r$. As the cycle $o$ intersects the set $W$ oddly, one of the cycles in 
the generating set has to intersect the set $W$ oddly. We pick such a cycle for $o_1$.  
\end{proof}

\begin{sublem}\label{oprime_exists}
 There is a cycle $o'$ of $G'$ included in the explorer-neighbourhood $\expl(b_1',b_2')$ of 
length bounded by 
$r$ that intersects the set 
$W$ oddly.
\end{sublem} 

\begin{proof}
Let $o_1$ be a cycle as in \autoref{o1_exists}. First assume the cycle $o_1$ does not traverse the 
local 2-separator $\{a_1,a_2\}$ (here we say that a cycle \emph{traverses} a local 2-separator if 
the cycle traverses\footnote{Given a cycle $o$ and separator $S$ in a graph $G$, a \emph{traversal} 
of $o$ is a minimal subpath of $o$ such that the vertex just after and just before that subpath are 
in different components of $G\sm S$. A cycle traverses a 2-separator in exactly two subpaths, each 
being a single vertex, or not at all.} this set as a separator of the 
explorer-neighbourhood [of parameter 
$r$]). Then the cycle $o_1$ of $G$ is a cycle of $G'$.
So we can take $o'=o_1$ and are done.
Hence we may assume, and we do assume, that the cycle $o_1$ traverses the local 2-separator 
$\{a_1,a_2\}$. We remark that as the cycle $o_1$ is a cycle of the graph $G$ -- not just of the 
explorer-neighbourhood -- it cannot contain two copies of a vertex of the graph $G$. 

One of the subpaths of $o_1$ from $a_1$ to $a_2$ contains an even number of 
edges of $W$, the other one an odd number of edges of $W$.
Let $P$ be the subpath of $o_1$ from $a_1$ to $a_2$ that contains an odd number of edges of $W$.
Then the edges of $P$ form a path of the graph $G'$ from a slice of $a_1$ to a slice of 
$a_2$. 
We obtain the cycle $o'$ from $P$ by adding a torso path (which is added by the definition of 
local cutting between the two slices $a_1'$ and $a_2'$), which is does not intersect $W$ by 
the choice of 
the 
component $k'$. \end{proof}

The edge set $W$ is a cut of the explorer-neighbourhood $\expl(b_1',b_2')$, and the cycle $o'$, 
which is given by \autoref{oprime_exists}, is a 
cycle of that graph. So they must intersect evenly (as all cuts and cycles do). This is a direct 
contradiction to \autoref{oprime_exists}. Hence the punctured explorer-neighbourhood 
$\expl(b_1,b_2)-b_1-b_2$ in 
$G$ is disconnected, and so $\{b_1,b_2\}$ is an $r$-local separator of the graph $G$. More 
specifically, the edges $e$ and $f$ are separated by $\{b_1,b_2\}$. 
\end{proof}

\begin{rem}\label{rem77} Here we come back to \autoref{expl_explained}. 
If the punctured double ball around two vertices is disconnected, then so is the punctured 
explorer-neighbourhood around them. We say that an $r$-local separator $\{v,w\}$ is \emph{mundane} 
if the punctured double-ball around $v$ and $w$ is disconnected.   \myref{recur_gd0_ONESTEP} says 
that the class of local 2-separators has the corner property, a property which plays a central 
role in the structural theory of genuine separators.
The subclass of mudane local 2-separators does not have this property, which can be seen from 
\autoref{fig:no-corner} as follows.
If one obtained $G'$ 
from the depicted graph by locally cutting at the mudane local separator $\{b_1,b_2\}$, the corner 
$\{a_1,b_1\}$ becomes a mudane local separator, which does not come from a mudane local separator 
of 
$G$.
\end{rem}

   \begin{figure} [htpb]   
\begin{center}
   	  \includegraphics[height=6cm]{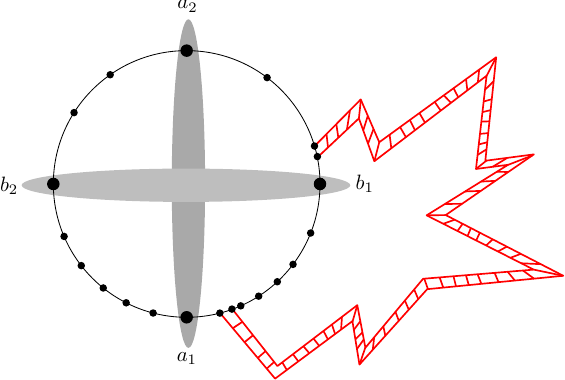}
   	  \caption{Two crossing local separators. They are highlighted in grey and denoted by 
$\{a_1,a_2\}$ and $\{b_1,b_2\}$. The long red strip joins a neighbour of $a_1$ with a neighbour of 
$b_1$. Its length is long enough so that the punctured 
double-balls $(B_{r/2}(a_1)\cup B_{r/2}(a_2))-a_1-a_2$ and $(B_{r/2}(b_1)\cup 
B_{r/2}(b_2))-b_1-b_2$ are 
disconnected but so short that the punctured 
double-ball $(B_{r/2}(a_1)\cup B_{r/2}(b_1))-a_1-b_1$ is connected.} \label{fig:no-corner}
\end{center}
   \end{figure}

\begin{eg}
While \autoref{fig:no-corner}, already shows that a notion of local 2-separators based on punctured 
double balls would not have all the desired properties, here we give a second more advanced example 
that demonstrates that even more properties fail for the class of mudane local separators. 

We start by giving a formal definition of the graph $G$ depicted in \autoref{fig:moebius-corner}.
Given a parameter $r\geq 6$, let $M$ be the graph obtained from a cycle $o$ of length $2r$ by 
adding a paths of length two between two antipodal vertices of $o$ (that is, any two vertices of 
$o$ of distance precisely $r$). We obtain $G$ from $M$ by picking an edge $e$ of $o$ arbitrarily 
and subdividing it four times. 
Informally speaking, the graph $G$ is a subdivided Moebius strip, see \autoref{fig:moebius-corner}.

Now we give names to four vertices of $G$. Denote the two endvertices of the edge $e$ by $a_2$ and 
$b_2$. Denote the antipodal vertex of the vertex $a_2$ on $o$ by $a_1$, and let $b_1$ be the 
subdivision vertex of the edge $e$ that has distance two from $a_2$ (and consequently distance 
three from $b_2$). 

We observe (and prove below) that:
\begin{enumerate}[(1)]
 \item $B_{\frac{r}{2}}(a_1)\cup B_{\frac{r}{2}}(b_1)-a_1-b_1$ is disconnected;
 \item $B_{\frac{r}{2}}(b_1)\cup B_{\frac{r}{2}}(b_2)-b_1-b_2$ is disconnected ALTHOUGH;
  \item $B_{\frac{r}{2}}(a_1)\cup B_{\frac{r}{2}}(a_2)-a_1-a_2$ is connected AND;
   \item $B_{\frac{r}{2}}(a_1)\cup B_{\frac{r}{2}}(b_2)-a_1-b_2$ is connected;
\end{enumerate}

To see (1) note that the ball $B_{\frac{r}{2}}(b_1)$ is included in the ball $B_{\frac{r}{2}}(a_1)$ 
and $B_{\frac{r}{2}}(a_1)-a_1-b_1$ is disconnected. 
Condition (2) follows from the fact that $\{b_1,b_2\}$ is a global 2-separator. 
To see (4), just note that the union of these two balls includes the whole graph $G$; and 
similarly to see (3) note that the union of the two relevant balls is equal to 
the graph $G$ with a single vertex removed. 

Similarly as  \autoref{fig:no-corner}, this example shows that the corner property does not in 
general hold for the class of mudane local separators. Also the following rather obscure thing 
happens. 
Let $G'$ be the graph obtained from $G$ by $r$-locally cutting at the local 2-separator 
$\{a_1,b_1\}$. Then $B_{\frac{r}{2}}(a_1)\cup B_{\frac{r}{2}}(a_2)-a_1-a_2$ is disconnected in the 
graph $G'$, while (3) above says that this property does not hold in $G$. It seems like 
$\{a_1,a_2\}$ should be an $r$-local 2-separator in any sensible $r$-local decomposition for 
$G$.
\end{eg}

   \begin{figure} [htpb]   
\begin{center}
   	  \includegraphics[height=3cm]{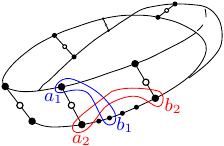}
   	  \caption{A subdivided Moebius strip. } \label{fig:moebius-corner}
\end{center}
   \end{figure}

\section{When all local 2-separators are crossed...}\label{sec:crossed}

In this section we prove \autoref{structure_torso_intro}, which later will be used in the proof of 
\autoref{main_2sepr-intro}.

\begin{proof}[Proof of \autoref{structure_torso}.]
 Let $r\in \Nbb\cup\{\infty\}$ and let $G$ be a connected graph that is $r$-locally 2-connected.
Assume that every $r$-local 2-separator of $G$ is crossed by an $r$-local 2-separator. 
We are to show that $G$ is $r$-locally $3$-connected or a cycle of length at most $r$.

Let $\{a_1,a_2\}$ be an $r$-local 2-separator, and let $\{b_1,b_2\}$ be an $r$-local 
2-separator that crosses it. By \myref{alt_exist}, there is a cycle $o$ of length at most 
$r$ alternating between these two local separators. 
Our aim is to show that the graph $G$ is equal to the cycle $o$. 
Suppose not for a contradiction. 
Then there 
is a vertex outside the cycle $o$. As the graph $G$ is connected, there is a vertex that is outside 
$o$ and adjacent to a vertex on the cycle $o$. Pick such a vertex and call it $x_2$, and denote 
one of its neighbours on the cycle $o$ by $x_1$. 
We distinguish two cases.

{\bf Case 1:} there is no vertex $y$ on the cycle $o$ such that $\{x_1,y\}$ is an $r$-local 
2-separator.

By $\Scal$ we denote the set of $r$-local 
2-separators with both their vertices on the cycle $o$. 
Given $\{c_1,c_2\}\in \Scal$, by \autoref{unique_copy_extended} all 
vertices on the cycle $o$ have a unique copy in the explorer-neighbourhood $\expl(c_1,c_2)$.
For these vertices we suppress a bijection between them and their unique copies in $\expl(c_1,c_2)$ 
to simplify notation.  
By $\Gamma(c_1,c_2)$ we denote the set of vertices on the cycle $o$ in the component of $o-c_1-c_2$ 
that contains the vertex $x_1$. 
The \emph{size} of a local separator $\{c_1,c_2\}$ in $\Scal$ is $|\Gamma(c_1,c_2)|$.
The set $\Scal$ is nonempty as $\{a_1,a_2\}\in \Scal$. 
Pick a local separator $\{w_1,w_2\}\in \Scal$ of minimal size.

\begin{sublem}\label{not_crossed1}
In Case 1, no $r$-local 2-separator crosses $\{w_1,w_2\}$. 
\end{sublem}

\begin{proof}
Suppose for a contradiction there is an $r$-local 2-separator $\{v_1,v_2\}$ that crosses 
$\{w_1,w_2\}$.
By \myref{alt_exist} there is a cycle $o'$ alternating between $\{v_1,v_2\}$ and $\{w_1,w_2\}$. 
Hence by \autoref{unique_copy_extended} the vertices $v_1$, $v_2$, $w_1$ and $w_2$ have unique 
copies in $\expl(v_1,v_2)$; and so in the following we will suppress a bijection between them and 
their copies in $\expl(v_1,v_2)$ from our notation. 
Let $P'$ be a subpath of $o'$ between $w_1$ and $w_2$ of length at most $r/2$.
Let $P$ be a subpath of $o$ between $w_1$ and $w_2$ of length at most $r/2$.
Let $o''$ be the closed walk obtained by concatenating $P$ and $P'$. The path $P'$ must contain one 
of the 
vertices $v_1$ or $v_2$, say $v_1$. Hence $o''$ is a closed walk through $v_1$ of length at most 
$r$; so 
it is contained within $B_{r/2}(v_1)$. So the path $P$ is a path of the ball $B_{r/2}(v_1)$. As 
$w_1$ and 
$w_2$ are in different components of the punctured explorer-neighbourhood $\expl(v_1,v_2)-v_1-v_2$, 
the path $P$ must contain the vertex $v_1$ or $v_2$. 
Thus the cycle $o$ contains the vertex $v_1$ or $v_2$. By condition (2) of 
\myref{alt_exist}, the cycle $o$ alternates between the local separators  $\{v_1,v_2\}$ and
$\{w_1,w_2\}$. 

By \autoref{cross_sym} each of the punctured explorer-neighbourhoods 
$\expl(v_1,v_2)-v_1-v_2$ and $\expl(w_1,w_2)-w_1-w_2$ has precisely two components. Hence there is 
a corner $\{v_i,w_j\}$ with $i,j\in \{1,2\}$ such that the vertex $x_1$ is a person of type one 
living in 
the corner $\{v_i,w_j\}$.
Hence by \myref{corner-lemma}, $\{v_i,w_j\}$ is an $r$-local 2-separator. It is in the set 
$\Scal$.
We claim that its size is strictly smaller than that of $\{w_1,w_2\}$. Indeed, by the 
`Moreover'-part of \myref{corner-lemma}, $\Gamma(v_i,w_j)$ only contains those vertices of $o$ on 
the path between $v_i$ and $w_j$ containing $x_1$. As the vertices $x_1$ and $v_i$ are in the 
same component of the punctured explorer-neighbourhood $\expl(w_1,w_2)-w_1-w_2$, all vertices 
of $\Gamma(v_i,w_j)$ are also in $\Gamma(w_1,w_2)$ but that set additionally contains the  vertex 
$v_i$. This is the desired contradiction.
Thus no $r$-local 2-separator crosses $\{w_1,w_2\}$. 
\end{proof}

\autoref{not_crossed1} contradicts the assumptions of the theorem. Hence the graph $G$ is a cycle. 
Having finished Case 1, it remains to treat the following (which will be somewhat similar).

{\bf Case 2: } not Case 1; that is, there is a vertex $y$ on the cycle $o$ such that $\{x_1,y\}$ 
is an $r$-local 2-separator.
 
By $\Scal$ we denote the set of $r$-local 
2-separators with both their vertices on the cycle $o$ and one of these vertices is equal to the 
vertex $x_1$. 
Given $\{c_1,c_2\}\in \Scal$, by \autoref{unique_copy_extended} all 
vertices on the cycle $o$ have a unique copy in the explorer-neighbourhood $\expl(c_1,c_2)$. 
For these vertices we suppress a bijection between them and their unique copies in $\expl(c_1,c_2)$ 
to 
simplify notation. 
Moreover, the edge $x_1x_2$ of $G$ has a unique copy in $\expl(c_1,c_2)$ by \autoref{unique_copy}. 
We suppress a bijection between the vertex $x_2$ of $G$ and the endvertex of the edge $x_1x_2$ in 
$\expl(c_1,c_2)$ that is a copy of $x_2$.

By $\Gamma(c_1,c_2)$ we denote the set of vertices on the cycle $o$ in the component of the 
punctured explorer-neighbourhood $\expl(c_1,c_2)-c_1-c_2$ that contains the vertex $x_2$. 
The \emph{size} of a local separator $\{c_1,c_2\}$ in $\Scal$ is $|\Gamma(c_1,c_2)|$.
The set $\Scal$ is nonempty by the assumption of Case 2. 
Pick a local separator $\{w_1,w_2\}\in \Scal$ of minimal size.

Arguing the same as in the proof of \autoref{not_crossed1} but referring to the fact that `$x_2$ is 
a 
person of type two' instead of `$x_1$ is a person of type one', one proves the 
following\footnote{By local 2-connectivity, if there is no person of type one but a person of type 
two, there is also a person of type three. So we need not to rely on persons of type three in our 
arguments.}.

\begin{sublem}\label{not_crossed2}
In Case 2, no $r$-local 2-separator crosses $\{w_1,w_2\}$. \qed
\end{sublem}
 
This completes all the cases. Hence in all cases, there is an $r$-local 2-separator that is not 
crossed by any other $r$-local 2-separator. This is a contradiction to the assumptions of this 
theorem. 
  Hence the graph $G$ must be equal to the cycle $o$.  
\end{proof}

\section{The uniqueness statement of the local 2-separator theorem}\label{sec:uni}

In this section we prove \autoref{main_2sepr-intro}. 
Our first goal is to prove 
\autoref{lifting-lem} below, which can be seen as the `inverse' 
of \myref{recur_gd0_ONESTEP}.

Let $G'$ be a graph obtained from a graph $G$ by $r$-locally cutting an $r$-local $2$-separator 
$\{a_1,a_2\}$. Let $\{b_1,b_2\}$ be an $r$-local $2$-separator of $G$. 

\begin{lem}\label{lift}
Assume $G$ is $r$-locally 2-connected. 
Assume $\{b_1,b_2\}$ is not crossed by $\{a_1,a_2\}$. 
Then there is a cycle $o'$ of the graph $G'$ of length at most $r$ that contains vertices 
$b_i'$ that are equal to $b_i$ or a slice of $b_i$ (for $i=1,2$). 

Moreover, for any $i\in \{1,2\}$ with $b_i\not\in \{a_1,a_2\}$ 
the two edges of $o'$ incident with $b_i'$ are separated by $\{b_1',b_2'\}$ -- or else the edge 
$b_1'b_2'$ is on $o'$.
\end{lem}

We remark that the neighbours of $b_i'$ are also neighbours of $b_i$ (unless the edge joining them 
is on a torso path), and they have unique copies in $\expl(b_1,b_2)$ by 
\autoref{unique_copy}; 
in this sense the `Moreover'-part is unambiguously defined. 

\begin{proof}[Proof of \autoref{lift}.]
As the graph $G$ is $r$-locally 2-connected, we can apply \myref{local_is_very_local} to deduce 
that there is a cycle $o$ of length at 
most $r$ in $G$ through the vertices $b_1$ and $b_2$ and such that interior vertices of 
different subpaths of $o$ between $b_1$ and $b_2$ are in different components of the punctured 
explorer-neighbourhood $\expl(b_1,b_2)-b_1-b_2$ -- or else the cycle $o$ contain the edge 
$b_1b_2$. 

If the cycle $o$ does not contain any vertex $a_i$, it is a cycle of the graph $G'$ and we are 
done. So we may assume, and we do assume, that a vertex $a_i$, say $a_1$, is on the cycle $o$. 
So the cycle $o$ is a cycle of the explorer-neighbourhood $\expl(a_1,a_2)$. 
In there,  $\{a_1,a_2\}$ is a genuine 2-separator. There are three cases: if $o$ does not contain 
any of its vertices, then $o$ is a 
cycle of the graph $G'$ and we are done. If $o$ contains precisely one vertex, all other vertices 
must be in a single local component and so $o$ is a 
cycle of the graph $G'$ and we are done. Hence it remains to consider the third case that both 
vertices $a_1$ and $a_2$ are on the cycle $o$.  As the local 
separator $\{a_1,a_2\}$ does not cross  $\{b_1,b_2\}$, the vertices $a_1$ and $a_2$ must be in the 
same component of the punctured explorer-neighbourhood $\expl(b_1,b_2)-b_1-b_2$. In particular, the 
cycle $o$ does not pre-alternate between the local separators $\{a_1,a_2\}$ and 
$\{b_1,b_2\}$. So there is a subpath $P$ of $o$ between the $a_i$ containing no vertex $b_j$ as an 
interior 
vertex. We obtain $o'$ from $o$ by replacing the path $P$ by a torso path to obtain a cycle of 
the graph $G'$. By the definition of the length of the torso paths the 
length of $o'$ is at most 
that of $o$. And $o'$ contains the vertices $b_i$ or slices thereof. 
This completes the proof except for the `Moreover'-part.

To see the `Moreover'-part, pick $b_i$ under the constraint that it is not in $\{a_1,a_2\}$. The 
two incident edges of the vertex 
$b_i'$ on $o'$ are not edges on torso paths. By the construction of the cycle $o'$, 
the two neighbours of $b_i'$ on $o'$ are separated by $\{b_1',b_2'\}$.
\end{proof}

\begin{dfn}[Lift]
 A \emph{lift} of a local 2-separator $\{b_1,b_2\}$ of $G$ is a set $\{b_1',b_2'\}$ of $G'$ such 
that each vertex 
$b_i'$ is equal to $b_i$ or a slice thereof, and such that there is a cycle $o'$ of length at 
most $r$ containing $b_1'$ and $b_2'$.  \addcontentsline{toc}{subsection}{Dfn: Lift}
\end{dfn}

\begin{rem}\label{lift_unique}
Under the circumstances of \autoref{lift}, it can be shown that for each local 2-separator 
$\{b_1,b_2\}$ a lift is \emph{uniquely} defined -- unless $\{b_1,b_2\}$ is identical to 
$\{a_1,a_2\}$ (note that $\{a_1,a_2\}$ does not cross itself). 
Indeed, if no $b_i$ is in the set $\{a_1,a_2\}$, this is clear. Otherwise there can be at most one 
vertex 
$b_i$ that is in  $\{a_1,a_2\}$, say it is $b_1$. A vertex $b_1'$ in a lift has distance at most 
$r/2$ from $b_2=b_2'$. As any other slice $x'$ of $b_1$ has distance at least $r+1$ from $b_1'$ 
by \autoref{cut_far}, the vertex $b_2$ has a too large distance from that vertex. So $\{x',b_2\}$ 
cannot be a lift. 
Thus we will in the following always refer to `the' lift. 
\end{rem}

\begin{lem}[Lifting Lemma]\label{lifting-lem}
Assume $G$ is $r$-locally 2-connected. 
Let $\{b_1,b_2\}$ be an $r$-local $2$-separator 
 of $G$. Assume $\{b_1,b_2\}$ is not crossed by $\{a_1,a_2\}$ and they are not identical.
Then the lift $\{b_1',b_2'\}$ of $\{b_1,b_2\}$ is an $r$-local $2$-separator of $G'$. 

More specifically, if edges $e$ and $f$ with precisely one endvertex in $\{b_1,b_2\}$ are 
separated by $\{b_1,b_2\}$ in $G$, then 
their contacts are separated by $\{b_1',b_2'\}$ in $G'$.
\end{lem}

\begin{proof}
 The proof strategy is somewhat similar to that of 
\myref{recur_gd0_ONESTEP}.
 Like in the proof of \autoref{recur_gd0_ONESTEP}, in this proof we will distinguish 
between the vertices of $G$ and $G'$ by adding a dash to the 
vertices of the graph $G'$. 

Let $e$ and $f$ be edges incident with 
precisely one of $b_1$ or $b_2$ that are separated by $\{b_1,b_2\}$ in $G$. 

\begin{sublem}\label{torso-lemma2}
 There is a single component $k$ of $\expl(b_1,b_2)-b_1-b_2$ containing an endvertex of every edge  
incident with precisely one of $b_1$ or $b_2$ whose contact is an edge of a torso path.  
\end{sublem}

\begin{proof}
 By assumption not both vertices $b_1$ and $b_2$ are in the set $\{a_1,a_2\}$. As we are done 
otherwise, we assume that precisely one of the vertices $b_i$ is in $\{a_1,a_2\}$.
By symmetry, we assume that $b_1=a_1$. 

Next, we determine the component of $\expl(a_1,a_2)-a_1-a_2$ belonging to the slice $b_1'$ of 
$a_1$. 
As $\{b_1,b_2\}$ is an $r$-local 2-separator, by 
\myref{local_is_very_local}, there is a cycle $o$ of length at most $r$ through $b_1$ and $b_2$.
By 
\autoref{unique_copy_extended}, the vertex $b_2$ has a unique copy in $\expl(a_1,a_2)$. 
So there is a unique slice of the vertex $a_1$ that has distance at most $r/2$ from the vertex 
$b_2=b_2'$. This is the slice for the component of $\expl(a_1,a_2)-a_1-a_2$ containing $b_2$. 
Denote that component by $k_2$. 
Hence the vertex $b_1'$ is the slice of the vertex $a_1$ for the component $k_2$. 

Next we define the component $k$.
 As $\{a_1,a_2\}$ is an $r$-local 2-separator, by 
\myref{local_is_very_local}, there is a cycle of length at most $r$ through $a_1$ and $a_2$. By 
\autoref{unique_copy_extended}, the vertex $a_2$ has a unique copy in $\expl(b_1,b_2)$. 
Let $k$ be the component of $\expl(b_1,b_2)-b_1-b_2$ that contains the vertex $a_2$.

Now let $e$ be an edge of $G$ incident with precisely one of $b_1$ or $b_2$ whose contact is an 
edge of a torso path.  Then $e$ is incident with the vertex $b_1=a_1$. 
Let $x$ be the endvertex of the edge $e$ aside from $b_1$.
As the contact for $e$ is an edge on a torso path, the vertex $x$ is outside the component 
$k_2$ of 
$\expl(a_1,a_2)-a_1-a_2$. As $G$ is $r$-locally $2$-connected, the punctured ball 
$B_{r/2}(a_1)-a_1$ is connected. So there is a path $P$ from $x$ to $a_2$ within that ball. As the 
vertex $b_2$ is in a different connected component of $\expl(a_1,a_2)-a_1-a_2$ than $x$, it is also 
in a different connected component of $B_{r/2}(a_1)-a_1-a_2$ than $x$. So the path $P$ does not 
contain the vertex $b_2$.
Thus $P$ is a path from $x$ to $a_2$ included in the component $k$.
Note that $P$ is a path of $\expl(b_1,b_2)$; indeed, it is within the ball of radius $r/2$ around 
$b_1$ (which is equal to $a_1$ here). 
 So the path $P$ witnesses that the vertex $x$ is in the component $k$ of 
$\expl(b_1,b_2)-b_1-b_2$. 

As the edge $e$ was arbitrary, the component $k$ contains an endvertex of every edge  
incident with precisely one of $b_1$ or $b_2$ whose contact is a torso edge.  
\end{proof}

Let $k_1$ be the component of the punctured explorer-neighbourhood $\expl(b_1,b_2)-b_1-b_2$ 
containing an endvertex of the edge $e$. By exchanging the roles of $e$ and $f$ if necessary, we 
assume that the component $k_1$ is different from the component $k$ in \autoref{torso-lemma2}. 
Let  $W$ be the set of edges of $G$ with one endvertex in $\{b_1,b_2\}$ and the other endvertex 
in $k_1$. By  \autoref{torso-lemma2}, the edge set $W$ does not contain an edge of a torso path 
and hence is 
an edge set of the graph $G'$ consisting of edges with precisely one endvertex in the set 
$\{b_1',b_2'\}$.

\begin{sublem}\label{path_exists2}
 There is a path $Q'$ from $b_1'$ to $b_2'$ contained in $\expl(b_1',b_2')$ that contains an 
even number of edges from $W$. 
\end{sublem}

\begin{proof}
As $G$ is $r$-locally 2-connected, by \myref{local_is_very_local} there is a cycle $o$ included 
in 
$\expl(b_1,b_2)$ containing the vertices $b_1$ and $b_2$ of length at most $r$, and $o$ 
contains vertices in two different components of $\expl(b_1,b_2)-b_1-b_2$ or an edge from $b_1$ to 
$b_2$. 
In the second case, we immediately set $Q$ to be the edge from $b_1$ to $b_2$. In the first case we 
define $Q$ as follows. 
As $\{a_1,a_2\}$ does not cross  $\{b_1,b_2\}$ by condition (4) in \myref{alt_exist}, one of 
the two subpaths of $o$ from $b_1$ to $b_2$ has no vertex $a_i$ as interior vertex.
Pick such a 
path and call it $Q$.  As $\{a_1,a_2\}\neq \{b_1,b_2\}$, one of the vertices $b_i'$ is equal to 
$b_i$. Hence the cycle $o$ is in the ball of radius $r/2$ around that vertex, and so in the 
explorer-neighbourhood $\expl(b_1',b_2')$. Thus the subpath $Q$ is within $\expl(b_1',b_2')$.

As $Q$ has both its endvertices on the same side of the cut $W$, it intersects that cut 
evenly. 
The edges of $Q$ form 
a path $Q'$ in the graph $G'$ from $b_1'$ to $b_2'$, which is also a path in $\expl(b_1',b_2')$ as 
shown above.  
\end{proof}

Suppose for a contradiction that the contacts for the edges $e$ and $f$ are not separated by 
$\{b_1',b_2'\}$ 
in $G'$; that is, they are incident with vertices of the same component 
of the 
punctured explorer-neighbourhood $\expl(b_1',b_2')-b_1'-b_2'$.

\begin{sublem}\label{o_exists2}
 There is a cycle $o'$ of the explorer-neighbourhood $\expl(b_1',b_2')$ in $G'$ that intersects the 
set 
$W$ oddly.
\end{sublem}
\begin{proof}
By assumption, there is a path $P'$ included in the punctured explorer-neighbourhood between the 
endvertices of the contacts for the edges $e$ and $f$ outside $\{b_1',b_2'\}$.
Now we extend the path $P'$ to a walk by adding the contacts for the edges $e$ and $f$ at the two 
ends. This walk intersects the set $W$ precisely in the edge\footnote{\autoref{torso-lemma2} 
implies that one of the edges $e$ and $f$ is its own contact. By fixing the roles of 
$e$ and $f$ above in the definition of the set $W$, we ensured that the edge $e$ is its own 
contact.} $e$.
If we added the same vertex to both ends, we obtained the desired cycle $o'$.

Otherwise we obtain a path between the vertices $b_1'$ and $b_2'$ in the explorer-neighbourhood 
that 
intersects the set $W$ oddly.
Concatenating this path with a path $Q'$ as in \autoref{path_exists2} yields a closed walk 
intersecting oddly. This closed walk is an edge-disjoint union of cycles and one of those cycles 
intersects $W$ oddly.  
\end{proof}

\begin{sublem}\label{o1_exists2}
There is a cycle $o_1'$ of $G'$ contained in the explorer-neighbourhood $\expl(b_1',b_2')$ of 
length bounded by $r$ that intersects the set 
$W$ oddly.
\end{sublem}
\begin{proof}
 Let $o'$ be a cycle as in \autoref{o_exists2}. By \autoref{cycle_gen}, the cycle $o'$ is 
generated 
from cycles of that are included within the balls of radius $r/2$ around the vertices $b_1'$ 
and 
$b_2'$. These cycles, in turn by \autoref{gen}, are generated by cycles within the respective balls 
of length bounded by $r$. To summarise: the cycle $o'$ is generated over the finite field 
$\Fbb_2$ by cycles of $G'$ contained in the explorer-neighbourhood $\expl(b_1',b_2')$ of 
length bounded by $r$. As the cycle $o'$ intersects the set $W$ oddly, one of the cycles 
in 
the generating set has to intersect the set $W$ oddly. We pick such a cycle for $o_1'$.  
\end{proof}

\begin{sublem}\label{oprime_exists2}
 There is a cycle $o$ of $G$ of 
length bounded by 
$r$ that intersects the set 
$W$ oddly. Moreover, $o$ is a cycle of $\expl(b_1,b_2)$. 
\end{sublem} 

\begin{proof}
Let $o_1'$ be a cycle as in \autoref{o1_exists2}. We remark that as the cycle $o_1'$ is a cycle of 
the graph $G'$ -- not just of the 
explorer-neighbourhood -- it cannot contain two copies of a vertex of the graph $G'$. 
As the cycle $o_1'$ has length at most $r$, by \autoref{cut_far} it contains at most one slice of 
any vertex of $G$. In particular, the cycle $o_1'$ contains at most one torso path. 

First assume the cycle $o_1'$ 
does not use any edges on torso paths. Then the edges of the cycle $o_1'$ of $G'$ form a cycle 
$o$ of $G$.
So we are done in this case.

Hence we may assume, and we do assume, that the cycle $o_1'$ contains a unique torso path. 
We denote by $Q'$ the subpath of $o_1'$ obtained by removing this torso path. Then $Q'$ bijects 
to 
an edge set $Q$ of the graph $G$. As the torso path is not in $W$ by construction, we 
deduce that $Q$ 
contains an odd number of edges of $W$.

By the definition of local cutting, there is a path $P$ of\footnote{Formally, the 
path $P$ lives in the 
explorer-neighbourhood $\expl(a_1,a_2)$ and in there it is included in a cycle of length at most 
$r$ and by \autoref{new_rem}, this cycle is a cycle of $G$, and so $P$ is a path of $G$.} 
$G$ between $a_1$ and $a_2$ associated to the torso path of $o_1'$
such that the cycle $o_1'$ with the torso path replaced by $P$ is a cycle of the graph $G$ of 
the 
same length as $o_1'$.
We denote that cycle by $o$.

To see the \lq Moreover\rq-Part note that the cycle $o$ contains at least one edge of $W$ and so 
must contain a vertex $b_i$ and so is in the ball of radius $r/2$ around that vertex and thus is 
 embedded in the explorer-neighbourhood $\expl(b_1,b_2)$. From now on we consider this embedding of 
$o$ in $\expl(b_1,b_2)$ (note that this is unambiguous as if $o$ contained both vertices $b_i$, 
then the embedding would be unique by \autoref{unique_copy_extended}). 

As $Q$ contains an odd number of edges of $W$, it remains to show that the path $P$ contains an 
even number of edges from the set $W$.
Denote by $a_1'$ and $a_2'$ the copies of the vertices $a_1$ and $a_2$ 
on $o$, respectively. 
The existence of $o$ implies that $\{a_1,a_2\}$ does 
not pre-cross $\{b_1,b_2\}$; that is, the copies $a_1'$ and $a_2'$ are in the same component of 
$\expl(b_1,b_2)-b_1-b_2$. So they are on the same side of the cut $W$.
So by \autoref{traverse_standard2}, $P$ contains an even number of edges of $W$. 
Thus the cycle $o$, which is composed of the paths $P$ and $Q$ contains an 
odd number of edges of $W$.  
The \lq Moreover\rq-Part has been shown in the text, so this completes the proof. 
\end{proof}

The edge set $W$ is a cut of the explorer-neighbourhood $\expl(b_1,b_2)$, and a cycle $o$ as 
in \autoref{oprime_exists2} is a 
cycle of that graph. So they must intersect evenly (as all cuts and cycles do). This is a direct 
contradiction to \autoref{oprime_exists2}. Hence the punctured explorer-neighbourhood 
$\expl(b_1',b_2')-b_1'-b_2'$ in 
$G'$ is disconnected, and so $\{b_1',b_2'\}$ is an $r$-local separator of the graph $G'$. 
More specifically, the contacts for $e$ and $f$ are separated by $\{b_1',b_2'\}$.
\end{proof}

\vspace{.3cm}

\begin{lem}\label{lift_non-crossing}Let $G$ be an $r$-locally 2-connected graph.
Let $\{a_1,a_2\}$, $\{b_1,b_2\}$ and $\{c_1,c_2\}$ be $r$-local 2-separators 
so that $\{a_1,a_2\}$ crosses neither $\{b_1,b_2\}$ nor $\{c_1,c_2\}$.
Construct $G'$ from $G$ by $r$-locally cutting $\{a_1,a_2\}$.
 
 Then the lifts of $\{b_1,b_2\}$ and $\{c_1,c_2\}$ cross in $G'$ if and only if  
 $\{b_1,b_2\}$ and $\{c_1,c_2\}$ cross in $G$.
 \end{lem}

\begin{proof}
Assume  $\{b_1,b_2\}$ and $\{c_1,c_2\}$ cross in $G$. Then by condition (2) of  
\myref{alt_exist} there is a cycle $o$ of length at most $r$ alternating between $\{b_1,b_2\}$ 
and $\{c_1,c_2\}$ traversing the local separator $\{b_1,b_2\}$ twice (that is, the subpaths of $o$ 
between $b_1$ and $b_2$ have interior vertices in 
different components of the 
punctured explorer-neighbourhood $\expl(b_1,b_2)-b_1-b_2$). 

If the cycle $o$ does not traverse the local separator $\{a_1,a_2\}$ twice, then $o$ is a cycle of 
the graph $G'$ (by the argument already given in the proof of \autoref{lift}). Then by 
\myref{lifting-lem}, $o$ traverses the lift of $\{b_1,b_2\}$ twice. Thus 
the lifts of $\{b_1,b_2\}$ and $\{c_1,c_2\}$ cross in $G'$.

Hence we may assume, and we do assume, that the cycle $o$ traverses the local separator 
$\{a_1,a_2\}$ twice. As $\{a_1,a_2\}$ crosses neither $\{b_1,b_2\}$ nor $\{c_1,c_2\}$, 
one $a_1$-$a_2$-subpath of $o$ must contain both vertices $b_1$ and $b_2$. As the cycle $o$
alternates, this path must also contain one vertex $c_i$ between $b_1$ and $b_2$ and it must 
contain the second vertex $c_i$ as $\{a_1,a_2\}$ does not cross $\{c_1,c_2\}$ (in the form of 
condition (4) of \myref{alt_exist}). So there is an $a_1$-$a_2$-subpath $P$ of $o$ containing none 
of the vertices $b_i$ or $c_j$ as interior vertices. 
Let $o'$ be the cycle obtained from $o$ by replacing the path $P$ by a torso path. 
The cycle $o'$ contains a lift of $\{b_1,b_2\}$ by construction. As the lift is unique by 
\autoref{lift_unique}, it must contain the lift of $\{b_1,b_2\}$. 
The cycle $o'$ traverses the lift of $\{b_1,b_2\}$ twice by the `More 
specifically'-part of \myref{lifting-lem}.
Similarly, $o'$ traverses the lift of $\{c_1,c_2\}$ twice. So it alternates 
between the lifts of $\{b_1,b_2\}$ 
and $\{c_1,c_2\}$. Thus the lifts of $\{b_1,b_2\}$ and $\{c_1,c_2\}$ cross by \myref{alt_exist}.

Next assume that the lifts $\{b_1',b_2'\}$ 
and $\{c_1',c_2'\}$ of $\{b_1,b_2\}$ and $\{c_1,c_2\}$ cross. 
Then by condition (2) of  
\myref{alt_exist} there is a cycle $o'$ of length at most $r$ alternating between 
$\{b_1',b_2'\}$ 
and $\{c_1',c_2'\}$  traversing $\{b_1',b_2'\}$ twice. 
Note that $o'$ contains at most one torso path by \autoref{cut_far}.
Let $o$ be the cycle obtained from the cycle $o'$ by replacing torso paths by paths of $G$ of the 
same length. By \myref{recur_gd0_ONESTEP} the cycle $o$ traverses the local separator 
$\{b_1,b_2\}$ 
twice. 
As the cycle $o$ alternates  between the 
local separators $\{b_1,b_2\}$ and $\{c_1,c_2\}$, these 
two local separators cross by \myref{alt_exist}.
\end{proof}

Sometimes we will omit the term `lift' and simply consider local separators of $G$ as local 
separators of a graph $G'$ obtained by cutting. 
The next lemma says that $r$-local cuttings along non-crossing 2-separators commute.

\begin{lem}\label{cuttings commute}
Let $G$ be an $r$-locally 2-connected graph with distinct non-crossing $r$-local $2$-separators 
$\{a_1,a_2\}$ and $\{b_1,b_2\}$. Then the graphs obtained from $r$-locally cutting these two 
local separators in either order are identical.
\end{lem}

\begin{proof}
 Let $G'$ be the graph obtained from $G$ by $r$-locally cutting $\{a_1,a_2\}$.
 Let $G''$ be the graph obtained from $G'$ by $r$-locally cutting (the lift of) $\{b_1,b_2\}$.
 Let $G_2$ be the graph obtained from $G$ by first cutting $\{b_1,b_2\}$ and then (the lift of) 
$\{a_1,a_2\}$.
 Let $\{b_1',b_2'\}$ be the lift of $\{b_1,b_2\}$ in $G'$. By \myref{lifting-lem}, the slices
 of $\{b_1,b_2\}$ in $G''$ and $G_2$ are identical. By symmetry, the same is true for slices of 
$\{a_1,a_2\}$. Vertices that are not slices are identical by construction. This defines a bijection 
between the vertices of the graphs $G''$ and $G_2$. It is straightforward to check that this 
bijection between the vertices extends to a bijection between the edges. 
\end{proof}

Given a set $\Scal$ of $r$-local 2-separators of a graph $G$ that pairwise do not cross, 
we say that a graph $G'$ is obtained from $G$ by \emph{$r$-locally cutting $\Scal$} if $G'$ is 
obtained from $G$ by the following procedure. Pick a linear ordering of the set $\Scal$. Then 
starting with the graph $G$ we cut along the local separators of $\Scal$ in that linear order. By 
\myref{lifting-lem} and \autoref{lift_non-crossing} this is well-defined. 
By \autoref{cuttings commute}, changing the linear ordering does not affect the graph obtained 
by cutting. Hence in the following we shall speak of \emph{the} graph obtained from $G$ by 
cutting along $\Scal$.

Given a graph $G$, by $\Ncal$ we denote the set of all $r$-local 2-separators of $G$ that are not 
crossed by any $r$-local 2-separator.

\begin{thm}\label{minimal_N}
 Assume $G$ is $r$-locally 2-connected, and let $G'$ be the graph obtained from $G$ by $r$-locally 
cutting $\Ncal$. Then every connected component of $G'$ is $r$-locally 3-connected or a cycle of 
length at most $r$. 

Let $G''$ be a graph obtained from $G$ by $r$-local cuttings such that all connected components 
are $r$-locally 3-connected or cycles of 
length at most $r$. Then in the construction of $G''$ one has to cut at any local separator $X$ 
or one of its lifts for all $X\in \Ncal$. 
\end{thm}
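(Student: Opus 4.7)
The plan is to apply \autoref{structure_torso} componentwise for the existence statement, and to derive a contradiction via iterated Lifting for the minimality statement.

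For the existence part, I would first observe that iteratively cutting elements of $\Ncal$ is well-defined (they pairwise do not cross, being uncrossed in $G$) and preserves $r$-local 2-connectivity by \autoref{loc2con_pres}, so $G'$ is $r$-locally 2-connected. To invoke \autoref{structure_torso}, it then remains to show every $r$-local 2-separator of $G'$ is crossed in $G'$. Take such a separator $\{b_1',b_2'\}$ and let $\{b_1,b_2\}$ be its projection to $G$ via iterated \myref{recur_gd0_ONESTEP}; this is an $r$-local 2-separator of $G$. If $\{b_1,b_2\}\notin\Ncal$, some $\{c_1,c_2\}$ crosses it in $G$; this $\{c_1,c_2\}$ is itself crossed so lies outside $\Ncal$, and no element of $\Ncal$ crosses $\{b_1,b_2\}$ or $\{c_1,c_2\}$. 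Iterated \myref{lifting-lem} then produces a lift of $\{c_1,c_2\}$ in $G'$, and iterated \autoref{lift_non-crossing} implies this lift crosses the unique (by \autoref{lift_unique}) lift of $\{b_1,b_2\}$, namely $\{b_1',b_2'\}$.

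The delicate case is $\{b_1,b_2\}\in\Ncal$: say $\{b_1,b_2\}$ is cut at step $i$ of the construction. Then I would trace the projection of $\{b_1',b_2'\}$ back to the intermediate graph $G_i$, where it becomes a pair of slices necessarily for the same component $x$ of $\expl(b_1,b_2)-b_1-b_2$ (otherwise these slices lie in distinct connected components of $G_i$). In the piece containing $x$, every edge incident with the slice pair is either the torso edge (which vanishes upon puncturing) or an edge inherited from $G_{i-1}$ going into $x$; any two such inherited edges are unseparated by $\{b_1,b_2\}$ in $G_{i-1}$ since their far endvertices both lie in $x$. Assuming for contradiction that the slice pair is an $r$-local 2-separator of $G_i$, the contrapositive of the ``More specifically'' clause of \myref{recur_gd0_ONESTEP} forces the contacts of these inherited edges to be unseparated in $G_i$, so every neighbour of the slice pair in the punctured explorer-neighbourhood lies in a single component---contradicting the separator assumption. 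Hence this case cannot occur, and \autoref{structure_torso} applies to each component of $G'$.

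For the minimality part, I would argue by contradiction: suppose some $X=\{a_1,a_2\}\in\Ncal$ has neither $X$ nor any lift cut during the construction of $G''$. Since $X$ is uncrossed in $G$, by iterated \autoref{lift_non-crossing} every cut separator (projected to $G$ via iterated \myref{recur_gd0_ONESTEP}) is non-crossing with the current lift of $X$ at each step; thus iterated \myref{lifting-lem} produces a lift $X''$ of $X$ in $G''$ which is an $r$-local 2-separator. Each component of $G''$ is either $r$-locally 3-connected (admitting no local 2-separator) or a cycle of length at most $r$ (where every local 2-separator is crossed by another vertex pair on the cycle), so $X''$ is crossed in $G''$, witnessed via \myref{alt_exist} by an alternating cycle of length at most $r$. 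Replacing each torso edge of this cycle by a realising path of the same length in the preceding graph, and iterating back to $G$, yields a closed walk in $G$ of length at most $r$ alternating between the vertices of $X$ and the projections of the crosser; this witnesses that $X$ is crossed in $G$, the desired contradiction. The principal obstacle is the Case B argument for existence above: the key insight is that the ``More specifically'' clause of \myref{recur_gd0_ONESTEP} used contrapositively, combined with the fact that all inherited edges at a same-component slice pair land in the single component $x$, is precisely what prevents such slice pairs from creating new local 2-separators.
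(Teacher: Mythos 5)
Your proposal tracks the paper's proof closely: existence via \autoref{structure_torso} plus iterated \myref{recur_gd0_ONESTEP}, \myref{lifting-lem} and \autoref{lift_non-crossing}, and minimality by a lifting-based contradiction. Two deviations are worth noting, neither fatal. First, you supply an explicit argument for the case where the projected separator $\{b_1,b_2\}$ lies in $\Ncal$ (so was already cut), by tracing $\{b_1',b_2'\}$ to a same-component slice pair in $G_{i+1}$ and using the contrapositive of the ``More specifically'' clause of \myref{recur_gd0_ONESTEP} to show such a slice pair cannot separate; the paper dismisses this case with ``by the construction of $G'$'', so your extra care is a genuine addition rather than a detour (modulo a handful of off-by-one index slips, $G_{i-1}$ for $G_i$, $G_i$ for $G_{i+1}$). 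Second, in the minimality argument you run the crossing transfer downward: lift $X$ to $G''$, observe it must be crossed there by the structure of $G''$, then pull the alternating cycle back through the cuttings to exhibit a crossing in $G$. The paper instead pushes non-crossing upward: it shows by induction via \autoref{lift_non-crossing} that the lift of $X$ remains uncrossed in every $G_i$, which is directly incompatible with all components of $G''$ being $r$-locally $3$-connected or short cycles. These are logically equivalent once you note \autoref{lift_non-crossing} is an equivalence; your manual ``replace torso edges by realising paths and iterate back'' step would be cleaner if phrased as the reverse application of \autoref{lift_non-crossing}, which spares you from checking that the resulting closed walk is actually a cycle inside the relevant explorer-neighbourhood.
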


\begin{proof}
Fix a linear ordering of the set $\Ncal$ and let $G_i$ be the graph obtained from $G$ by 
$r$-locally cutting the first $i$ elements of $\Ncal$. By \autoref{loc2con_pres} applied 
recursively each graph $G_i$ is $r$-locally 2-connected. The graph $G'$ is the last graph $G_i$. 

Suppose for a contradiction that some connected component of the graph $G'$ is neither
$r$-locally 
3-connected nor a cycle of length at most $r$. Then by \autoref{structure_torso}, the graph $G'$ 
has an $r$-local 2-separator $\{v,w\}$ that is not crossed by any other $r$-local 2-separator of 
$G'$. 
By \myref{recur_gd0_ONESTEP} applied recursively, $\{v,w\}$ is also an $r$-local 2-separator of 
the graph $G$. 
By the construction of the graph $G'$, the local separator $\{v,w\}$ is not in the set $\Ncal$. 
Thus there is some $r$-local 2-separator $\{a,b\}$ of $G$ 
that crosses $\{v,w\}$. By the choice of the set $\Ncal$, the local 2-separators $\{v,w\}$ and 
$\{a,b\}$ are not crossed by any local separator of $\Ncal$. Hence we can apply 
\myref{lifting-lem} recursively to deduce that $\{a,b\}$ is a local separator of the graph 
$G'$. Applying \autoref{lift_non-crossing} recursively yields that $\{v,w\}$ and $\{a,b\}$ are 
crossing in the graph $G'$. This is a contradiction to the fact that $\{v,w\}$ ia not crossed in 
$G'$. 
Thus every connected component of the graph $G'$ is $r$-locally 
3-connected or a cycle of length at most $r$.

 Now let $G''$ be a graph obtained from $G$ by $r$-local cuttings such that all connected 
components 
are $r$-locally 3-connected or cycles of 
length at most $r$.
 Let $(G_i)$ be a sequence of graphs starting with $G_0=G$ and ending with $G_n=G''$ such that 
$G_{i+1}$ is obtained from $G_i$ by $r$-local cutting. By \autoref{loc2con_pres} applied 
recursively each graph $G_i$ is $r$-locally 2-connected.

Suppose for a contraction there is an $r$-local 2-separator $\{v,w\}$ of the set $\Ncal$ 
such that neither it nor any of its lifts is identical to a local 2-separator at which we locally 
cut to obtain $G_{i+1}$ from $G_i$. 

We will show by induction that  $\{v,w\}$ does not cross any local separator of any graph $G_i$.
Assume we have shown it does not cross any local separator of a graph $G_j$. 
By 
\myref{recur_gd0_ONESTEP}, all local 2-separators of $G_{j+1}$ are lifts of local 2-separators of 
$G_j$. Applying this recursively yields that all local 2-separators at which we locally cut are 
lifts of 
local 2-separators of the graph $G$. So by 
\autoref{lift_non-crossing}  $\{v,w\}$ does not cross any local separator of the 
graphs $G_{j+1}$.  This completes the 
induction step. 

Thus the above is also true for the last graph $G_n=G''$, all of whose connected components are 
$r$-locally 3-connected or cycles of size at most $r$ by assumption. Such graphs do not have an 
$r$-local 2-separator that is not crossed. Hence $\{v,w\}$ cannot exist. This is a contraction. So 
for any local separator of $\Ncal$, it or one of its lifts has to appear in the construction of 
$G''$.   
\end{proof}

\begin{proof}[Proof of \autoref{main_2sepr-intro}.]
 This theorem is a direct consequence of \autoref{minimal_N}. 
\end{proof}

\section{Graph-Decompositions}\label{sec:graph-deco}
The purpose of this section is to define graph-decompositions and explain why they can be 
understood as a generalisation of tree-decompositions with the decomposition-tree replaced by a 
general graph. See also \cite{diestel2005graph}.

 \addcontentsline{toc}{subsection}{Dfn: Identification along}
\begin{dfn}
 First we need some preparation.
Given a graph $G$, a graph $F$ and a family $\Fcal$ of subgraph-embeddings of the graph $F$ in $G$, 
the graph obtained from $G$ by  \emph{identifying along} 
$\Fcal$ is the graph obtained from $G$ by identifying all elements of $\Fcal$. 
In this paper, all embeddings of the graph $F$ will be vertex-disjoint, although we do not make 
this part of the definition. 
Formally, the 
vertex set of this new graph is the vertex set of $G$ 
modulo the equivalence relation generated by the relation where two vertices $v_1$ and $v_2$ are 
related if there are graphs $F_1,F_2\in \Fcal$ with $v_i\in F_i$ (for $i=1,2$) such 
that after applying the isomorphisms to $F$ the vertices $v_1$ and $v_2$ are equal to the 
same vertex of $F$. 
The edges of the quotient-graph are the edges of $G$, where the endvertices are the 
equivalence classes of the 
original endvertices of $G$ -- with the following exception: if two vertices $v$ and $w$ in $F$ are 
joined by edge, then in the quotient-graph we keep only one copy of all the edges of $G$ 
between the copies of $v$ and $w$ in the graphs $F'\in \Fcal$. This completes the 
definition of gluing. Examples of gluing are given in \autoref{fig:gluing1} and 
\autoref{fig:gluing2}.
\end{dfn}

   \begin{figure} [htpb]   
\begin{center}
   	  \includegraphics[height=4cm]{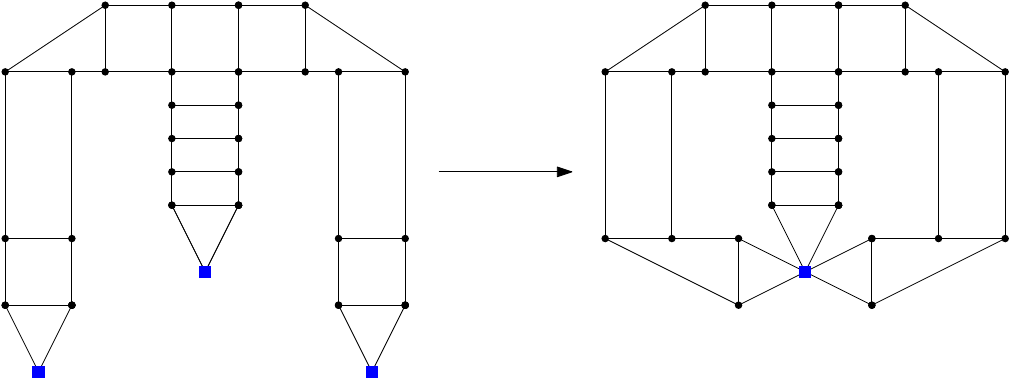}
   	  \caption{An example of a gluing. Here the graph $F$ consists of a 
single vertex and the family $\Fcal$ has three members, which are marked in 
blue. The graph $G$ before the gluing is depicted on the left. The graph after 
the gluing is depicted on the right. }\label{fig:gluing1}
\end{center}
   \end{figure}

   \begin{figure} [htpb]   
\begin{center}
   	  \includegraphics[height=4cm]{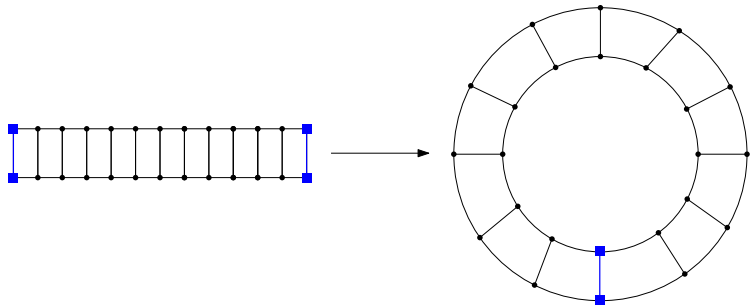}
   	  \caption{An example of a gluing. Here the 
graph $F$ consists of two vertices joined by an 
edge and the family $\Fcal$ has two members, which are marked in 
blue. The graph $G$ before the gluing is depicted on the left. The graph after 
the gluing is depicted on the right. If the graph $F$ consisted just of the two 
vertices without the edge, the gluing would be the graph obtained by the graph on 
the right by adding an edge in parallel to the blue edge.  (This figure is the same as 
\autoref{fig:gluing_new}.)}\label{fig:gluing2}
\end{center}
   \end{figure}

\begin{rem}
Even if the graph $G$ has no loops or parallel edges, graphs obtained from $G$ by identifying along 
a family may have loops or parallel edges, see \autoref{fig:gluing2}. 
\end{rem}

\begin{dfn}[Graph-decomposition]
A \emph{graph-decomposition} consists of a bipartite graph $(B,S)$ with bipartition classes $B$ 
and $S$, where the elements of 
$B$ are referred to as `bags-nodes' and the elements of $S$ are referred to as 
`separating-nodes'. 
This bipartite graph is referred to as the `decomposition graph'. 
For each node $x$ of the decomposition graph, there is a graph $G_x$ associated to $x$. 
Moreover for every edge $e$ of the decomposition graph from a separating-node $s$ to a bag-node 
$b$, 
there is a map $\iota_e$ that maps the associated graph $G_s$ to a subgraph of the associated graph 
$G_b$. We refer to $G_s$ with $s\in S$ as a \emph{local separator} and to $G_b$ with $b\in B$ as a 
\emph{bag}.

The \emph{underlying graph} of a graph-decomposition $(G_x|x\in V(B,S))$ is constructed from the 
disjoint union of the bags $G_b$ with $b\in B$ by identifying along all the families 
given by the copies of the graphs $G_s$ for $s\in S$. 
Formally, for each separating-node $s\in S$, its family is 
$(\iota_e(G_s)))$, where the index ranges over the edges of $(B,S)$ incident with $s$. 
 \addcontentsline{toc}{subsection}{Dfn: Graph-decomposition}
\end{dfn}

\begin{rem}
For some applications, it may be attractive to not construct the underlying graph all at once, as 
we do here, but rather \lq step by step\rq\ by doing the gluing in a certain order. We expect 
that one essentially gets the same graph in the natural cases, independently of the ordering. As 
gluing is the reverse operation to local splitting and for local splitting we have proved 
commutativity (compare \autoref{cuttings commute}), for the graph-decompositions related to 
\autoref{thm:main_intro} this is indeed the case. However, we require the technical tool of contacts 
to formalise this.
We shall not use this in this paper.
\end{rem}

Now we perform the identification for all these families separately. We remark that different 
orderings in which we perform these identification result in the same graph.

The \emph{width} of a graph-decomposition $(G_x|x\in V(B,S))$ is the maximal vertex number of a bag 
$G_b$ with 
$b\in B$  take away one\footnote{This convention to take away one is common in the literature.  
Consequently, trees have tree-width one.}. The \emph{adhesion} of a graph-decomposition 
$(G_x|x\in V(B,S))$ is the maximum vertex 
number of a local separator $G_s$ with $s\in S$. 
This completes the definition of graph-decompositions and related concepts.

\begin{eg}
Essentially, tree-decompositions are examples of graph-decompositions. 
 Indeed, given a tree-decomposition, one obtains a new tree-decomposition by subdividing every edge 
once and associating to each new vertex the separator associated to that edge (this separator is 
given by taking the intersection of the two bags at the endvertices of that edge).

This defines a graph-decomposition whose decomposition-graph is the decomposition-tree of this 
newly constructed tree-decomposition. Its bags are the original bags of that tree-decomposition 
and its local separators are separators of the old tree-decomposition; that is, the bags associated 
to the new vertices of the new tree-decomposition.

The notions of width and adhesion as defined above for graph-decompositions whose decomposition 
graphs are trees coincide with the 
standard notions for tree-decompositions when interpreted as graph-decompositions in the way 
explained above. 
\end{eg}

\begin{eg}
 The graph on the right of \autoref{fig:gluing1} has a graph-decomposition with only one bag which 
is given by the graph on the left, and only one local separator, which is given by the blue vertex. 
Its decomposition-graph is the bipartite graph consisting of three edges in parallel.
\end{eg}

\begin{eg}
 The graph on the right of \autoref{fig:gluing2} has a graph-decomposition with only one bag which 
is given by the graph on the left, and only one local separator, which consists of an edge. 
Its decomposition-graph is the bipartite graph consisting of two edges in parallel.
\end{eg}

 \addcontentsline{toc}{subsection}{Dfn: Local cut}
\begin{dfn}
 This definition is slightly technical. 
For this 
definition consider graph-decompositions such that for every separating-node $s$ 
there are no two distinct embedding maps $\iota_{f}$ that map vertices of $s$ to the same vertex or 
endvertices of the same edge (informally, this means that the images of $s$ under the maps 
$\iota_{f}$ are sufficiently far away from each other. For sets $s$ we are interested in here  
-- those that come from $r$-local $2$-separators -- this will always be the case).

Given a graph-decomposition of a graph $G$ and an edge $f$ of the decomposition-graph incident with 
a separating-node $s$ and a bag-node $b$, the \emph{local cut} associated to $f$ consists of those 
edges 
of $G$ that have precisely one endvertex in $\iota_f(G_s)$ and the other endvertex in the bag 
$\iota_f(G_b)$. 
We say that a cycle 
\emph{traverses the local cut associated to the edge $f$ oddly} if it contains an odd number of 
edges from that local cut. 
\end{dfn}

 \addcontentsline{toc}{subsection}{Dfn: Locality}
\begin{dfn}
 A graph-decomposition has \emph{locality} $r$ if every cycle traversing a local cut 
corresponding to an edge of the decomposition-graph oddly\footnote{An alternative definition would 
be to replace `traversing oddly' 
by `traversing effectively zero', taking additionally orientations of traversals into account. For 
simplicity we just make the definition with `oddly'.} has length larger than $r$.
\end{dfn}

\begin{eg}
In graph-decompositions whose decomposition graph is a tree all cycles traverse evenly and hence 
their locality is infinite. 
\end{eg}

\begin{eg}
The locality of the graph-decomposition described in \autoref{fig:gluing2} is 11, as the 
shortest cycle traversing oddly has length 12. 
The locality of the graph-decomposition described in \autoref{fig:gluing1} is 9.
\end{eg}

There is a correspondence between nested sets of separations and tree-decompositions. A 
corresponding fact for graph-decompositions is also true. Here we only need the following special 
case of this correspondence. 

\begin{lem}\label{nested_to_td}
Let $G$ be a graph with a set $S$ of non-crossing $r$-local 2-separators.
Let $B$ be the set of connected components of the graph obtained from $G$ 
by $r$-locally cutting along 
$S$.

Then there is a decomposition graph with bipartition $(B,S)$ of a graph-decomposition of $G$ of 
adhesion two and locality $r$. 
\end{lem}

\begin{proof}
For every local component of a local separator $G_S$ with $s\in S$ we have an edge $e$ in the graph 
$(B,S)$ 
from $s$ to the unique bag-node $b\in B$ such that $G_b$ contains the slices of $\iota_e(G_s)$ 
for that local 
component. The map 
$\iota_e$ maps $G_s$ to this copy of $G_s$ in $G_b$. So $(B,S)$ is the decomposition graph 
of a graph-decomposition of $G$. It has adhesion two as all elements  $G_s$ with $s\in S$ have size 
two. It has locality $r$ as all local separators $G_s$ with $s\in S$ are $r$-local. 
\end{proof}

A \emph{torso} of a bag $G_b$ of a graph-decomposition is obtained from $G_b$ by joining for 
every 
map 
$\iota$ from a local separator $G_s$ any two vertices in the image of $\iota$ by 
an 
edge whenever $sb$ is an edge of the decomposition graph. 

\begin{proof}[Proof of \autoref{thm:main_intro}.]
 Combine \autoref{nested_to_td} and \autoref{minimal_N}. 
\end{proof}

\section{Outlook}\label{sec:out}

Having finished the proof of the local 2-separator theorem, we outline possible ways in 
which this theorem could be extended. 
We continue our investigation of local separators in \cite{locksepr} by 
proving a 
local version of the tangle tree theorem. 
Another direction, might be to prove a matroidal analogue of our local $2$-separator theorem.

\begin{que}
 Can you prove a local 2-separator theorem for (representable) 
matroids that is reminiscent of \autoref{thm:main_intro}?
\end{que}

A natural next step would be to prove a 
local 
version of the Grohe-Decomposition-Theorem, which gives a decomposition of a 3-connected graph into 
`quasi 
4-connected components' \cite{grohe2016quasi}. We hope that with the methods of this paper one 
should be able to 
prove 
the following. 

It is natural to try to extend the notion of local 2-separators to local 3-separators using 
explorer-neighbourhoods around three vertices; this is work in progress \cite{locksepr}. 
As for local 2-separators the intuition for this comes from separators of the $r$-local cover. I 
think that understanding local separators through separators of the $r$-local cover is an 
exciting area for future research. 
Given a 
parameter $s\in 
\Nbb\cup\{\infty\}$, we say that a graph 
is \emph{$s$-locally quasi 4-connected} if for every $s$-local 3-separator all but one of its 
components (that is, components of the punctured explorer-neighbourhood) contain at most one 
vertex. 

\begin{con}\label{grohe_ext}
For every parameter $r\in \Nbb\cup\{\infty\}$ there is a parameter $s$ such that  
every $s$-locally $3$-connected graph $G$ has a graph-decomposition
of locality at least $r$ and adhesion at most $3$ such that all its torsos are 
 minors
of $G$ that are either $r$-locally quasi-4-connected or a complete graph of order at most 4.
\end{con}

\begin{rem}
 We do not conjecture any relationship between $r$ and $s$, and even $r=s$ may be possible.
\end{rem}

\appendix
\section{Appendix I: an alternative proof for the existential statement}\label{ex2}

In this part we give an alternative proof of the first sentence of \autoref{main_2sepr-intro} that 
only relies on lemmas proved before \autoref{props}.

Basic examples of $r$-locally 2-connected graphs are cycles of length at 
most $r$ and $r$-locally 
3-connected graphs. The following theorem says that all  $r$-locally 2-connected graphs 
can be constructed from these basic graphs by $r$-local 2-sums.

   \begin{thm}\label{existential}
   Given a parameter $r$, every $r$-locally 2-connected graph $G$ can be obtained
   from a graph that is a disjoint union of 
   $r$-locally 3-connected graphs and cycles of length at most $r$ by $r$-local 
2-sums. 
   \end{thm}

\begin{proof}
In order to simplify the presentation of this proof, we work with the slight modification of the 
definition of \lq $r$-local cutting', where we leave out the step, where we replace torso edges 
by torso paths.  
 
Our strategy to prove this theorem is the following. 
If the graph $G$ does not have any $r$-local 2-separator, it is $r$-locally 3-connected and we are 
done. Otherwise, pick an $r$-local 2-separator arbitrarily and 
$r$-locally cut along that local 2-separator. Then we iterate this procedure.

Formally, this is expressed as follows. Let $G_1=G$. Assume we already defined $G_i$. If $G_i$ is 
$r$-locally 3-connected or a cycle of length at most $r$, we stop. Otherwise, pick an $r$-local 
2-separator $\{a_i,b_i\}$ 
 arbitrarily, and obtain the graph $G_{i+1}$ by $r$-locally cutting at 
the local separator $\{a_i,b_i\}$. 
It is easily proved by induction that each graph $G_i$ is $r$-locally 2-connected using 
\autoref{loc2con_pres}. 

If this procedure terminates, then by \autoref{inverse_sum_cut} applied recursively, the graph 
$G$ has the desired 
decomposition. 

Hence all that remains to show is that this procedure stops eventually. For that we introduce 
parameters that decrease in each cutting operation. 
Let $\gamma_i$ be the dimension of the cycle space of the graph $G_i$. 
We abbreviate: $e_i=|E(G_i)|$ and $v_i=|V(G_i)|$. By $k_i$ we denote the number of components of 
the graph $G_i$. 
The following identity relating edge number, vertex number and the number of components of a graph 
is well-known (see for example \cite{DiestelBookCurrent}):
\begin{equation}\label{eq1}
 e_i=\gamma_i+v_i-k_i
\end{equation}

Let $\ell$ be the number of torso edges produced at the $i$-th cutting step. We have: 

\[
 e_{i+1}=e_i+\ell, \ \ v_{i+1}= v_i+2(\ell-1)
\]

Subtracting \autoref{eq1} from itself with indices `$i+1$' and `$i$', respectively, and plugging in 
the above yields:
\begin{equation}\label{eq2}
 \gamma_{i+1}=\gamma_i -(\ell-2)+(k_{i+1}-k_i)
\end{equation}

\begin{rem}[Motivation]
First we explain heuristically why one expects this process to terminate referring to 
\autoref{eq2}. 
If $G_{i+1}$ has more components than $G_i$, then one expects that each of these new components is 
smaller than $G_i$ and hence one could apply induction.

If $\ell>2$, then one can apply induction on $\gamma_i$. Hence it remains to consider the case that 
$\ell=2$ and the graphs $G_{i+1}$ and $G_i$ have the same numbers of components; that is, 
$k_{i+1}-k_i=0$. Still we would like to apply induction on $\gamma_i$ but it might not go down 
immediately -- but it will go down eventually. To see that consider 
$\overline{\gamma}_i$, which is defined to be the dimension of the cycles generated by those of 
length at 
most $r$. We will show  that $\overline{\gamma}_{i+1}>\overline{\gamma}_i$ in this case.

As $\overline{\gamma}_i\leq \gamma_i$, we 
can make only a bounded number of steps in which $\gamma_i$ stays constant -- as 
$\overline{\gamma}_i$ 
increases in each of these steps. Thus $\gamma_i$ will 
decrease eventually and we may apply induction.
\end{rem}

Formally, we argue like this. 
Given a connected graph $G$, we consider the triple $(\gamma, -\overline{\gamma}, v)$, where 
$\gamma$ is the 
dimension of the cycle space of $G$, $\overline{\gamma}$ is the dimension of its cycles generated 
by 
cycles 
of length at most $r$, and $v$ is the number of vertices of $G$. 
We consider the order on connected graphs given by the lexicographical ordering according to 
the triples:
$(\gamma, -\overline{\gamma}, v)$; that is, if $\gamma(H)<\gamma(G)$, then $H<G$. If the 
$\gamma$-values 
are 
equal, we compare the values for $-\overline{\gamma}$. If they are also equal, we compare the 
vertex-numbers. We refer to this ordering as the \emph{triplex-ordering}.

\begin{sublem}\label{well-founded}
 The triplex-ordering is well-founded.
\end{sublem}

\begin{proof}
Suppose for a contradiction there is an infinite sequence that is strictly decreasing in the 
triplex ordering.
Then the parameter $\gamma$ must be eventually constant. As $\overline{\gamma}\leq \gamma$, there 
are only boundedly many possible values for $-\overline{\gamma}$ (after that). Hence this parameter 
also has to be eventually constant. Then also the vertex number has to be eventually constant. This 
is a contradiction to the assumption that the sequence is infinite and strictly decreasing. 
\end{proof}

\begin{sublem}\label{cases}
 Let $G$ be a connected graph and $G'$ be obtained from $G$ by $r$-locally cutting an $r$-local 
2-separator of $G$, then every connected component of $G'$ is strictly smaller than $G$ in the 
triplex-ordering.
\end{sublem}

\begin{proof}
{\bf Case 1: } the graph $G'$ is connected.
If the number $\ell$ of slices is at least three, then by \autoref{eq2}, $G'$ is strictly smaller 
in the triplex-ordering. So we may assume, and we do assume, that $\ell=2$. Then by \autoref{eq2} 
$\gamma(G')=\gamma(G)$. Hence it suffices to do the following computation.

\begin{sublem}\label{gamma_bar}
 $\overline{\gamma}(G')>\overline{\gamma}(G)$. 
\end{sublem}

\begin{proof}
We denote by $\Fbb_2^E$ the vector space over the finite field $\Fbb_2$ whose set of coordinates is 
$E$, the set of edges of the graph $G$. Similarly, we denote by $\Fbb_2^{E'}$ the vector 
space over the finite field $\Fbb_2$ whose set of coordinates is $E'$, the set of 
edges of the graph $G'$. We denote by $\Ccal$ those vectors generated by the cycles of length at 
most $r$ in the graph $G$. We denote by $\Ccal'$ those vectors generated by the cycles of length 
at 
most $r$ in the graph $G'$. The set $E'$ is obtained from the set $E$ by adding the two torso 
edges. Consider the set $\Ccal''$ of vectors $v$ with coordinates in $E$ such that there are 
assignments to the torso edges so that $v$ extends to a vector in $\Ccal'$. Note that $\Ccal''$ is 
a vector space that has the same dimension as $\Ccal'$. 

Next we will show that $\Ccal\se \Ccal''$. For that take a cycle $o$ of the graph $G$ of length at 
most $r$. If its edge set is a cycle of the graph $G'$, the cycle $o$ is also in $\Ccal''$.
Otherwise, it contains a vertex of the local separator. So it is contained in a ball 
of radius $r/2$ around that vertex. If it does not contain the other vertex of the local separator, 
it is also a cycle of $G'$. Otherwise, in $G'$ it consists of two 
paths joining the slices of that local separator. This edge set is also in the vector 
space $\Ccal''$.

By \myref{local_is_very_local} the vector space $\Ccal''$  contains a path between two slices, 
which is not in $\Ccal$ (indeed such a path is a subpath of the cycle $o'$ from that lemma).
Thus the dimension of the vector space $\Ccal''$ is strictly larger than the dimension of the 
vector space $\Ccal$. So the dimension of the vector space $\Ccal'$ is strictly larger than the 
dimension of the 
vector space $\Ccal$.
\end{proof}

Hence by \autoref{gamma_bar},
$\overline{\gamma}(G')>\overline{\gamma}(G)$, and so $G'$ is strictly 
smaller than $G$ in the triplex ordering in this case.

{\bf Case 2: } the graph $G'$ is disconnected. Let $H'$ be an arbitrary component of the graph 
$G'$. 
As each connected component of the graph $G'$ contains a cycle through one of its torso edges by 
\myref{local_is_very_local}, by \autoref{eq2} we have that $\gamma(H')\leq \gamma(G)$. Moreover, 
if we have equality, all other components of $G'$ are single cycles (as they don't have local 
cutvertices by \autoref{loc2con_pres}), and $\ell=2$. So $\overline{\gamma}(H')= 
\overline{\gamma}(G)$. By 
\myref{local_is_very_local} each component of $G'$ has at least one vertex that is not a 
slice (note that as $\ell=2$ local cutting does not produce an artificial cycle of 
length two). 
Hence $v(H')<v(G)$, and so $H'$ is strictly smaller than 
$G$ in the triplex-ordering. This completes the proof in Case 2 of this sublemma. 
\end{proof}

By \autoref{cases}, each connected component of the graph $G_{i+1}$ is strictly smaller than some 
connected component of the graph $G_i$ in the triplex-ordering. Hence by \autoref{well-founded} we 
may apply induction. Thus this 
procedure has 
to stop eventually. Let $\Gcal$ be the set of connected components of the graph $G_i$ where this 
terminates. As explained above, we can apply \autoref{inverse_sum_cut} recursively to deduce that 
the graph $G$ is constructed via local 2-sums from the set $\Gcal$. 
\end{proof}

\begin{rem}
 It seems to us that \autoref{existential} is also true with a different notion of local 
2-separators that is 
based on double-balls. However, examples such as that given in \autoref{fig:no-corner} show that 
such alternative decompositions cannot be unique. Thus 
we believe that such a statement would be less applicable than 
\autoref{existential}, see Application (D) in the Introduction for details. 
\end{rem}

\bibliographystyle{plain}
\bibliography{literatur}

\end{document}